\newtheorem{thm}{Theorem}[section]
\newtheorem{lemma}[thm]{Lemma}
\theoremstyle{definition}
\theoremstyle{remark}
\newtheorem{rem}[thm]{Remark}
\newcommand{\R}{\mathbb{R}}
\newcommand{\dint}{\ds \int}
\newcommand{\ds}{\displaystyle}
\newcommand{\eqskip}{ \vspace*{2mm}\\ }
\newcommand{\hyp}{\mathbb{H}}
\newcommand{\om}{ \Omega }
\newcommand{\sph}{ \mathbb{S} }
\newcommand{\s}{{\rm s}}
\def\om{\omega}
\def\e{\varepsilon}
\def\l{\lambda}
\def\r{\rho}
\def\Odr{{\rm O}}
\def\odr{{\rm o}}
\def\Hloc{W_{2,loc}}
\def\di{\,\mathrm{d}}
\def\LL{\mathcal{L}}
\def\HH{\mathcal{H}}
\def\RR{\mathcal{R}}
\DeclareMathOperator{\Dom}{\mathcal{D}}
\numberwithin{equation}{section}
\title{The spectrum of geodesic balls on spherically symmetric manifolds}
\author{Denis Borisov and Pedro Freitas}
\address{Institute of Mathematics CC USC RAS, Chernyshevsky str. 112, 450008, Ufa, Russia \&
Bashkir State Pedagogical University, October St.~3a, 450000 Ufa,
Russia \&  University of Hradec Kr\'alov\'e, Rokitansk\'eho 62, 500 03, Hradec Kr\'alov\'e, Czech Republic}\email{borisovdi@yandex.ru}
\address{Department of Mathematics, Faculty of Human Kinetics \&
Group of Mathematical Physics, Faculty of Sciences, University of Lisbon, Campo Grande, Edif\'{\i}cio C6,
1749-016 Lisboa, Portugal}
\email{psfreitas@fc.ul.pt}
\date{\today}
\subjclass[2000]{Primary 35P15; Secondary 58C40 }
\keywords{spectrum, geodesic balls, Hadamard formula, asymptotics}
\begin{document}

\allowdisplaybreaks

\begin{abstract}
We study the Dirichlet spectrum of the Laplace operator on geodesic
balls centred at a pole of spherically symmetric manifolds. We
first derive a Hadamard--type formula for the dependence of the first eigenvalue
$\lambda_{1}$ on the radius $r$ of the ball, which allows us to obtain
lower and upper bounds for $\lambda_{1}$ in specific cases. For the
sphere and hyperbolic space, these bounds are asymptotically sharp as
$r$ approaches zero and we see that while in two dimensions $\lambda_{1}$
is bounded from above by the first two terms in the asymptotics for
small $r$, for dimensions four and higher the reverse inequality holds.

In the general case we derive the asymptotic expansion of $\lambda_{1}$
for small radius and determine the first three terms explicitly.
For compact manifolds we carry out similar calculations as the radius of
the geodesic ball approaches the diameter of the manifold. In the
latter case we show that in even dimensions there will always exist
logarithmic terms in these expansions.
\end{abstract}

\maketitle

\section{Introduction}

Within the last forty years several papers appeared
in the literature devoted to the study of the first eigenvalue of
geodesic disks on spherically symmetric manifolds, with particular emphasis on the
case of spaces of constant curvature -- see, for example,~\cite{bagi,babe,bcg,came94,frha,gage,grig,katz,mcke,mata,pins78,pins94,sato,wang}.
In this particular instance, namely, hyperbolic space $\hyp^{n}$ and spheres
$\sph^{n}$, the solutions are known explicitly in terms of Legendre
functions of the first kind, the eigenvalues then being given in terms of zeros of such
functions. What is thus of interest is to obtain bounds and approximations which
may be written explicitly in terms of more elementary functions, as in general
computer packages may determine these zeros with the needed accuracy -- it should, however,
be noted that in limit cases such as when the radius becomes very large in hyperbolic space
these computations may still pose some difficulties from a numerical point of view.

For general spherically symmetric manifolds, however, it will not be possible to write the
eigenvalues of geodesic balls explicitly in terms of {\it known} functions, and it then becomes
important to have accurate estimates for these quantities. Apart from their intrinsic interest,
bounds of this type may also be used to estimate eigenvalues of balls on manifolds
which are not necessarily spherically symmetric, by making use of the recent spectral
comparison results established in~\cite{frmasa}.

A major difference when moving away from the Euclidean framework is that scaling the domain
no longer translates into a mere scaling in spectral terms. More precisely, while in the former
case we have $\lambda(\alpha \Omega) = \alpha^{-2}\lambda(\Omega)$ for a scaling of a domain $\Omega$
by a positive number $\alpha$ and any eigenvalue $\lambda$, for the latter the variation of eigenvalues
will provide extra information on the metric.

With the above in mind, it makes thus sense as a first step in this direction to derive a
Hadamard--type formula for the variation of the first eigenvalue of a geodesic ball with respect
to the radius. This will in turn allow us to derive some new lower and upper bounds which,
apart from improving existing results within certain ranges of the
radius, also have the advantage of being explicit in the sense mentioned
above. As an example of this, Theorem~\ref{thm:hypsph} gives lower and upper
bounds for disks in hyperbolic space and spheres which agree with the
first two terms in the asymptotics of the first eigenvalue as the radius
approaches zero.

In fact, and as a consequence of these bounds, we see that for (non-Euclidean) constant
curvature spaces, while in two dimensions the first eigenvalue is bounded from above by the first two
terms in the asymptotics, this relation is reversed for dimensions greater than
or equal to $4$ -- for $\hyp^{3}$ and $\sph^{3}$ the expressions obtained are
exact.

In the case of a general $n-$manifold, the first two terms in these asymptotic expansions
of the $k^{\rm th}$ eigenvalue $\lambda_{k}$ are known to be given by
\[
\lambda_{k}(r) = \frac{\ds 1}{\ds r^2}\gamma_{k} - \frac{\ds 1}{\ds 6} \mathcal{S}(p)+ \odr(1),
\]
where $\gamma_{k}$ denotes the $k^{\rm th}$ eigenvalue of the unit disk in $n-$dimensional Euclidean
space and $\mathcal{S}(p)$ denotes the scalar curvature at the point $p$ -- see~\cite[p. 318]{chav}.
In the particular case of a spherically symmetric manifold we shall derive the full asymptotic expansion for
the first eigenvalue of a disk of small radius $r$ centred at a pole and, for compact manifolds, also the expansion as this radius
approaches the diameter of the manifold. We then determine the expression for the first three
terms explicitly, from which we obtain, for instance, that under certain natural smoothness assumptions
the third term in the expansion above is of order $r^2$ and the corresponding expansion for $\lambda_{1}$ is given by
\[
\lambda_{1}(r) =  \frac{\ds j_{\frac{n}{2}-1,1}^2}{\ds r^2} - \frac{\ds 1}{\ds 6} \mathcal{S}(p)+ \left[\alpha_{1} \mathcal{S}^2(p) + \alpha_{2}
\mathcal{S}''(p)\right]r^2 + \odr(r^4),
\]
where $\alpha_{1}$ and $\alpha_{2}$ are constants which depend only on the dimension -- see Theorem~\ref{thmexpr0} below for the
details, including explicit expressions for these coefficients. The expressions for the asymptotics as the radius of the
ball approaches the diameter are more involved (in particular, they depend in a nontrivial way on the dimension) and are
presented in Theorem~\ref{th5.1}. Although this situation corresponds to the well--known singular perturbation problem of a manifold with a
small hole whose volume approaches zero -- see~\cite{fluc,chav} and~\cite{manapl}, for instance, and the references therein --
we shall see that in all even dimensions logarithmic terms do appear in the expansions for $\lambda_{1}$. This comes as a
surprise as, to the best of our knowledge, so far only in two dimensions was it known that a logarithmic term would be present (in
fact the leading term), as a direct consequence of the singularity of the corresponding Green's function -- see also Table II in~\cite{fluc}
which contains an overview of the results for this type of problems.

The plan of the paper is as follows. In the next section we fix the notation and state
some basic facts which will be used in the sequel. Section~\ref{sec:hadamard}
contains the statement and derivation of the Hadamard-type formula, followed by its application to obtaining upper and
lower bounds in Section~\ref{sec:appl}. We then proceed to determine the asymptotic expansions
mentioned above for small and maximal radius in Sections~\ref{asymptr0} and~\ref{asymptrmax}, respectively.

\section{Notation and preliminaries\label{notprelim}}

Given a sufficiently smooth function $f:\R^{+}_{0}\to\R^{+}_{0}$ satisfying
\begin{equation}\label{2.0}
f(0)=0,\quad f'(0)=1,\quad f''(0)=0,
\end{equation}
and $f(r)>0$ for all $r$ in $(0,R)$ for some positive $R$ (possibly infinite),
let $M$ be the spherically symmetric $n-$manifold with metric $dr^2+f^2(r)d\theta^{2}$ and $p$ be the point at
the {\it North pole}. The above restriction on the second derivative of $f$ stems from the fact that the scalar curvature
$\mathcal{S}$ at a point $p$ is given by~\cite[p. 69]{pete06}
\begin{equation}\label{exprscalarcurv}
 \mathcal{S}(p) = -2(n-1) \frac{\ds f''(t)}{f(t)} + (n-1)(n-2) \frac{\ds 1-\left[f'(t)\right]^2}{\ds f^2(t)}
\end{equation}
and it thus follows that for this to be finite at $t=0$ one must have $f''(0)=0$. It turns out that for the metric
to be regular at $t$ equal to zero stronger restrictions have to be imposed on $f$. In particular, if it is to be smooth,
then all even derivatives of $f$ must vanish at zero~\cite[pp. 12--13]{pete06}. Throughout the paper we shall make this assumption,
although it is clear that if $f^{(2k)}(0)$ is not zero for some $k$ larger than one then the terms appearing in
the asymptotics in Section~\ref{asymptr0} should be modified accordingly.

We consider the geodesic ball $B(r)$ centred at $p$ and with radius $r$. The $n-$volume of this ball is then given by
\begin{equation*}
V(r) = \omega_{n-1}\dint\limits_{0}^{r}f^{n-1}(t) dt,
\end{equation*}
while the $(n-1)-$volume of its boundary is given by
\[
S(r) = \omega_{n-1}f^{n-1}(r),
\]
where $\omega_{n-1}$ denotes the $(n-1)-$volume of the unit sphere $\sph^{n-1}$.

In this setting, the first eigenvalue of the Laplace-Beltrami operator on $B(r)$ with Dirichlet boundary conditions is simple and the corresponding
eigenfunction does not change sign and is also spherically symmetric. Denoting this eigenvalue by $\lambda=\lambda(r)$ and by $\psi$ a corresponding
eigenfunction, we have that the pair $(\lambda,\psi)$ satisfies
\begin{equation}\label{eqn:basic}
\left\{
\begin{array}{ll}
-\left[f^{n-1}(t)\psi'(t)\right]' = \lambda f^{n-1}(t)\psi(t), & t\in(0,r)\eqskip
\psi'(0)=\psi(r)=0. &
\end{array}
\right.
\end{equation}

We can also interpret $\l$ and $\psi$ as the first eigenvalue and the associated eigenfunction of the operator $\HH_r$, where $\HH_r$ is introduced as
the self-adjoint operator in $X_r^0$ associated with the closed lower-semibounded quadratic form $h_r[u]:=\|u'\|_{X_r^0}^2$ on $X_r$. Here
\begin{align*}
&X_r:=\{u\in\Hloc^1(0,r):\  u(r)=0,\ \|u\|_{X_r}<+\infty\},\quad \|u\|_{X_r}^2:=\|u'\|_{X_r^0}^2+\|u\|_{X_r^0}^2,
\\
&\|u\|_{X_r^0}^2:=\int\limits_{0}^{r} f^{n-1}(t)|u(t)|^2\di t,\quad X_r^0:=\{u\in L_{2,loc}(0,r): \|u\|_{X_r^0}<+\infty\}.
\end{align*}
In this way, $\lambda$ may also be obtained via its variational formulation which now becomes
\begin{equation}\label{eqn:variational}
\lambda(r) = \inf_{u\in X_{r}}\frac{\ds \dint_{0}^{r} f^{n-1}(t)\left[u'(t)\right]^2 dt}
{\ds \dint_{0}^{r} f^{n-1}(t)u^{2}(t)dt}.
\end{equation}

\section{A Hadamard-type formula for the first eigenvalue\label{sec:hadamard}}
While in the Euclidean case the first eigenvalue of balls centred at any point and with different radii are related to each other by
a simple rescaling, this will not be the case in more general ambient spaces. As a consequence, the way in which the eigenvalue varies
as a function of the radius of the ball will contain information about the ambient space. The purpose of this section is to establish
a formula for such a variation in the more general situation of a spherically symmetric manifold with the centre of the ball placed at
the North pole.
\begin{lemma}\label{lem:basic1}
Let $\psi$ be a solution of equation~(\ref{eqn:basic}) associated to the first eigenvalue
$\lambda(r)$. Then, for any non--negative $r_{0}$,
\[
\lambda(r) = \frac{\ds 1}{\ds r^2}\lim_{s\to r_{0}}\left[s^2\lambda(s)\right]
+ \frac{\ds n-1}{\ds 2r^2}\dint_{r_{0}}^{r} \left[\frac{\ds t \dint_{0}^{t}H(s) f^{n-1}(s)\psi^{2}(s) ds}{\ds \dint_{0}^{t} f^{n-1}(s)\psi^{2}(s) ds}
\right]dt,
\]
where
\[
\begin{array}{l}
H(t) = \left[t h'(t)+2h(t)\right] \eqskip
h(t) = g'(t)+\frac{\ds n-1}{\ds 2} g^{2}(t) \eqskip
g(t) = \frac{\ds f'(t)}{\ds f(t)}.
\end{array}
\]
In the particular case where $r_{0}$ is zero, the above expression simplifies to
\[
\lambda(r) = \frac{\ds j_{\frac{n}{2}-1,1}^2}{\ds r^2}
+ \frac{\ds n-1}{\ds 2r^2}\dint_{0}^{r} \left[\frac{\ds t \dint_{0}^{t}H(s) f^{n-1}(s)\psi^{2}(s) ds}{\ds \dint_{0}^{t} f^{n-1}(s)\psi^{2}(s) ds}
\right]dt,
\]
where $j_{\frac{n}{2}-1,1}$ is the first zero of the Bessel function $J_{\frac{n}{2}-1}$.
\end{lemma}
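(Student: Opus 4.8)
The plan is to differentiate the scaled eigenvalue $s^2\lambda(s)$ with respect to $s$, integrate the result from $r_0$ to $r$, and identify the integrand with the stated expression. The first step is to reduce the boundary-value problem~(\ref{eqn:basic}) on the varying interval $(0,s)$ to a problem on a fixed interval by the change of variable $t=s\tau$, $\tau\in(0,1)$; writing $\psi(t)=v(\tau;s)$ one obtains an operator of the form $s^{-2}$ times an $s$-dependent Sturm--Liouville operator on $(0,1)$ with weight $f^{n-1}(s\tau)$. Then $s^2\lambda(s)$ is the first eigenvalue of that $s$-dependent family, and standard analytic perturbation theory (the eigenvalue is simple, the eigenfunction can be chosen smooth in $s$) gives the Hadamard-type derivative formula
\[
\frac{\dm}{\dm s}\bigl[s^2\lambda(s)\bigr] = \frac{\ds \dint_{0}^{s}\partial_s\!\left[f^{n-1}(t)\right]\!\Bigl(\bigl[\psi'(t)\bigr]^2 - \lambda(s)\psi^2(t)\Bigr)\dm t \;+\;(\text{boundary contribution at }t=s)}{\ds \dint_{0}^{s} f^{n-1}(t)\psi^2(t)\,\dm t},
\]
where $\psi=\psi(\cdot;s)$ is normalised appropriately. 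The delicate point here is the bookkeeping of the moving endpoint: differentiating the Dirichlet condition $\psi(s;s)=0$ produces a term $\psi'(s)\cdot\psi'(s)$, and one must combine it carefully with the bulk terms.

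Next I would simplify the numerator. Using the equation~(\ref{eqn:basic}) in the form $-(f^{n-1}\psi')'=\lambda f^{n-1}\psi$, an integration by parts converts $\dint_0^s (n-1)f^{n-2}f'\bigl([\psi']^2-\lambda\psi^2\bigr)$ into an expression involving only $\psi$, $\psi'$ and the logarithmic derivative $g=f'/f$ and its derivatives, the key algebraic identity being that the combination that survives is governed by $h=g'+\tfrac{n-1}{2}g^2$ and then $H=th'+2h$. Concretely, one writes $f^{n-1}=\exp\!\bigl((n-1)\int g\bigr)$, so that $\partial_t\log f^{n-1}=(n-1)g$, and repeated integration by parts against the eigenfunction identity peels off the curvature-type quantity $h$; the factor $t$ and the extra $2h$ arise precisely from the scaling derivative $\frac{\dm}{\dm s}$ acting through $t=s\tau$. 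After these manipulations the derivative of $s^2\lambda(s)$ takes the form
\[
\frac{\dm}{\dm s}\bigl[s^2\lambda(s)\bigr] = \frac{\ds (n-1)s\dint_{0}^{s} H(t)\, f^{n-1}(t)\psi^2(t)\,\dm t}{\ds 2\dint_{0}^{s} f^{n-1}(t)\psi^2(t)\,\dm t}\cdot\frac{1}{s}\,,
\]
matching (after multiplying through by the appropriate power of $s$) the integrand in the statement.

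Then integrating from $r_0$ to $r$ and dividing by $r^2$ yields the first displayed formula, with the limit $\lim_{s\to r_0}[s^2\lambda(s)]$ appearing as the lower boundary value (the limit is needed so that the case $r_0=0$ is covered, since $\lambda(s)\to\infty$ there). For the second formula I would compute $\lim_{s\to 0}[s^2\lambda(s)]$: using $f(t)=t+O(t^3)$ from~(\ref{2.0}) and the smoothness assumption, the rescaled operator converges to the Euclidean radial Laplacian $-v''-\tfrac{n-1}{\tau}v'$ on $(0,1)$ with Dirichlet condition at $\tau=1$, whose first eigenvalue is $j_{\frac n2-1,1}^2$ (the first eigenfunction being $\tau^{1-\frac n2}J_{\frac n2-1}(j_{\frac n2-1,1}\tau)$). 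Continuity of the first eigenvalue under this convergence — which can be justified by a direct min-max comparison using~(\ref{eqn:variational}) and the uniform control $f^{n-1}(s\tau)/(s\tau)^{n-1}\to 1$ — gives $\lim_{s\to 0}[s^2\lambda(s)]=j_{\frac n2-1,1}^2$, completing the proof.

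The main obstacle I expect is the moving-boundary differentiation together with the integration-by-parts identity that collapses the raw Hadamard term into the clean expression in terms of $H$; keeping track of which boundary terms cancel (and verifying the needed decay/regularity of $\psi$ near $t=0$, where $f^{n-1}\sim t^{n-1}$ is degenerate, so that all integrations by parts are legitimate) is where the real work lies. The limit computation for $r_0=0$ is conceptually routine but requires a genuine (if standard) continuity-of-eigenvalues argument rather than a formal substitution.
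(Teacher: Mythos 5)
Your overall strategy -- establish a formula for $\frac{\dm}{\dm s}\bigl[s^{2}\lambda(s)\bigr]$, integrate from $r_{0}$ to $r$, and identify the $r_{0}=0$ limit with $j_{\frac{n}{2}-1,1}^{2}$ -- is the same as the paper's, and your identification of the $r_{0}=0$ endpoint via convergence of the rescaled Rayleigh quotient to the Euclidean radial problem is sound. The route to the derivative formula, however, differs: the paper does not invoke analytic perturbation theory for the family $s\mapsto\HH_{s}$. Instead it uses the eigenfunction $\psi$ for $\lambda(r)$ as a test function in the Rayleigh quotient for $\lambda(\alpha r)$ to get a one-sided bound, multiplies the eigenvalue equation by $f^{n-1}(\alpha t)\psi(t)/f^{n-1}(t)$, integrates by parts twice, plugs back into the bound, forms the difference quotient, and lets $\alpha\to1^{-}$ and $\alpha\to1^{+}$ separately; the two one-sided inequalities pinch to an identity for $r\lambda'(r)+2\lambda(r)$. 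This sidesteps the need to justify smoothness of $s\mapsto\psi(\cdot;s)$ and the bookkeeping of the moving endpoint that you flag as the delicate point, at the cost of two explicit integrations by parts. Both approaches buy the same result; the paper's is more self-contained, yours is more conceptual but outsources regularity to Kato-type theory.

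The genuine gap in your proposal is that the crucial algebraic step is asserted rather than performed: the claim that the raw Hadamard term, after integration by parts against $-(f^{n-1}\psi')'=\lambda f^{n-1}\psi$, collapses precisely into $\frac{n-1}{2}\bigl[t\,h'(t)+2h(t)\bigr]$ with $h=g'+\frac{n-1}{2}g^{2}$ is exactly the content of the lemma, not a side issue. The specific shape of $H$ (the factor $t$, the coefficient $2$, the quadratic $g^{2}$ term with coefficient $\frac{n-1}{2}$) has to come out of a concrete calculation; your proposal says only that it is ``governed by'' $h$ and $H$ and that ``the real work lies'' in tracking boundary terms, which is an acknowledgement that the computation has not been done. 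You also have a minor bookkeeping slip: your displayed expression for $\frac{\dm}{\dm s}\bigl[s^{2}\lambda(s)\bigr]$ cancels the factor of $s$ against a $1/s$, which would drop the factor $t$ appearing in the integrand of the lemma; the correct identity is $\frac{\dm}{\dm s}\bigl[s^{2}\lambda(s)\bigr]=\frac{(n-1)s}{2}\cdot\bigl(\int_{0}^{s}Hf^{n-1}\psi^{2}\bigr)\big/\bigl(\int_{0}^{s}f^{n-1}\psi^{2}\bigr)$, equivalently $r\lambda'(r)+2\lambda(r)=\frac{n-1}{2}\bigl(\int_{0}^{r}Hf^{n-1}\psi^{2}\bigr)\big/\bigl(\int_{0}^{r}f^{n-1}\psi^{2}\bigr)$, and the $t$ in the statement comes from multiplying this by $r$ before integrating.
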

\begin{rem}
Note that the eigenfunction $\psi$ also depends on $r$ and that $H$ depends (linearly) on $n$.
\end{rem}
\begin{proof} From the variational formulation~(\ref{eqn:variational}) we have
\[
\lambda(\alpha r) = \inf_{u\in X_{\alpha r}}\frac{\ds \dint_{0}^{\alpha r} f^{n-1}(t)\left[u'(t)\right]^2 dt}
{\ds \dint_{0}^{\alpha r} f^{n-1}(t)u^{2}(t)dt}=\frac{\ds 1}{\ds \alpha^2}
\inf_{u\in X_{r}}\frac{\ds \dint_{0}^{r} f^{n-1}(\alpha t)\left[u'(t)\right]^2 dt}
{\ds \dint_{0}^{r} f^{n-1}(\alpha t)u^{2}(t)dt}
\]
and
\begin{equation}\label{eqn:raylquot}
\lambda(\alpha r)\leqslant
\frac{\ds 1}{\ds \alpha^2}\frac{\ds \dint_{0}^{r} f^{n-1}(\alpha t)\left[\psi'(t)\right]^2 dt}
{\ds \dint_{0}^{r} f^{n-1}(\alpha t)\psi^{2}(t)dt}
\end{equation}
where $\psi$ is a first eigenfunction corresponding to $\lambda$, that is, $\psi$ and $\lambda=\lambda(r)$ satisfy equation~(\ref{eqn:basic}).

Multiply now equation~(\ref{eqn:basic}) by $f^{n-1}(\alpha t)\psi(t)/f^{n-1}(t)$ and integrate
between $0$ and $r$ by parts twice to obtain
\[
\begin{array}{lll}
\dint_{0}^{r}f^{n-1}(\alpha t)\psi^2(t)dt\; \lambda(r) & = & -\dint_{0}^{r}
\frac{\ds f^{n-1}(\alpha t)\psi(t)}{\ds f^{n-1}(t)}\left[f^{n-1}(t)\psi'(t)\right]'dt\eqskip
& = & (n-1)\dint_{0}^{r}\left[\alpha f^{n-2}(\alpha t)f'(\alpha t)\right.\eqskip
& & \left.\hspace*{0.5cm}-f^{n-1}(\alpha t) \frac{\ds f'(t)}{\ds f(t)}\right]\psi(t)\psi'(t)dt\eqskip
& & \hspace*{1cm}+\dint_{0}^{r}f^{n-1}(\alpha t)\left[\psi'(t)\right]^2 dt\eqskip
& = & -\frac{\ds n-1}{\ds 2}\dint_{0}^{r}\left[\alpha^2(n-2)\left[\frac{\ds f'(\alpha t)}
{\ds f(\alpha t)}\right]^2 + \alpha^2\frac{\ds f''(\alpha t)}
{\ds f(\alpha t)}\right.\eqskip
& & \hspace*{0.5cm}\left.-\alpha(n-1)\frac{\ds f'(t)f'(\alpha t)}
{\ds f(t)f(\alpha t)}-\frac{\ds f''(t)}{\ds f(t)}+\left[\frac{\ds f'(t)}
{\ds f(t)}\right]^2\right]\eqskip
& & \hspace*{1cm}\times f^{n-1}(\alpha t)\psi^2(t)dt + \dint_{0}^{r}f^{n-1}(\alpha t)\left[\psi'(t)\right]^2 dt
\end{array}
\]
Plugging this back into~(\ref{eqn:raylquot}) yields
\[
\begin{array}{lll}
 \lambda(r)& \geqslant & \alpha^{2}\lambda(\alpha r)-\frac{\ds n-1}{\ds 2}
\frac{\ds \dint_{0}^{r}\left[\alpha^2(n-2)\left[\frac{\ds f'(\alpha t)}
{\ds f(\alpha t)}\right]^2 + \alpha^2\frac{\ds f''(\alpha t)}
{\ds f(\alpha t)}\right.}{\ds \dint_{0}^{r}f^{n-1}(\alpha t)\psi^2(t)dt}\eqskip
& & \hspace*{0.5cm}\frac{\ds \left.-\alpha(n-1)\frac{\ds f'(t)f'(\alpha t)}
{\ds f(t)f(\alpha t)}-\frac{\ds f''(t)}{\ds f(t)}+\left[\frac{\ds f'(t)}
{\ds f(t)}\right]^2\right]f^{n-1}(\alpha t)\psi^2(t)dt}{\ds \dint_{0}^{r}f^{n-1}(\alpha t)\psi^2(t)dt}.
\end{array}
\]
Consider first the case of $\alpha$ smaller than one. Then
\[
\begin{array}{lll}
 \frac{\ds \lambda(r) - \alpha^2 \lambda(\alpha r)}{\ds 1-\alpha} & \geqslant &
\frac{\ds n-1}{\ds 2(\alpha-1)}\frac{\ds \dint_{0}^{r}\left[\alpha^2(n-2)\left[\frac{\ds f'(\alpha t)}
{\ds f(\alpha t)}\right]^2 + \alpha^2\frac{\ds f''(\alpha t)}
{\ds f(\alpha t)}\right.}{\ds \dint_{0}^{r}f^{n-1}(\alpha t)\psi^2(t)dt}\eqskip
& & \hspace*{0.5cm}\frac{\ds \left.-\alpha(n-1)\frac{\ds f'(t)f'(\alpha t)}
{\ds f(t)f(\alpha t)}-\frac{\ds f''(t)}{\ds f(t)}+\left[\frac{\ds f'(t)}
{\ds f(t)}\right]^2\right]f^{n-1}(\alpha t)\psi^2(t)dt}{\ds \dint_{0}^{r}f^{n-1}(\alpha t)\psi^2(t)dt}
\end{array}
\]
and upon taking limits on both sides as $\alpha$ goes to $1^{-}$ we obtain, after some lengthy
computations,
\[
\begin{array}{lll}
  r \lambda'(r)+2\lambda(r) & \geqslant & \frac{\ds n-1}{\ds 2}\frac{\ds \dint_{0}^{r}\left[t g''(t)+
(n-1)t g(t)g'(t)+2g'(t)\right.}{\ds \dint_{0}^{r}f^{n-1}(t)\psi^2(t)dt}\eqskip
& & \hspace*{0.5cm}\frac{\ds\left.+(n-1)g^{2}(t)\right]f^{n-1}(t)\psi^{2}(t)dt}{\ds \dint_{0}^{r}f^{n-1}(t)\psi^2(t)dt},
\end{array}
\]
where $g(t)=f'(t)/f(t)$.

Now note that if we had taken $\alpha$ to be larger than one instead, then the inequalities
above would be reversed. Thus, after taking limits, now with $\alpha$ approaching one from
above, we would get the above inequality reversed. Hence this is an indentity, meaning we
have
\[
 \begin{array}{lll}
 r \lambda'(r)+2\lambda(r) & = & \frac{\ds n-1}{\ds 2}\frac{\ds \dint_{0}^{r}\left[t
h'(t)+2h(t)\right]f^{n-1}(t)\psi^{2}(t)dt}{\ds \dint_{0}^{r}f^{n-1}(t)\psi^2(t)dt},
\end{array}
\]
with $h(t)=g'(t)+(n-1)g^{2}(t)/2$.
Multiplying the above equation by $r$ and integrating between $r_{0}$ and $r$, yields the desired result.
\end{proof}

\subsection{Applications of Lemma~\ref{lem:basic1} to hyperbolic space and spheres\label{sec:appl}}

As in the classical Hadamard formula for the variation of the first eigenvalue with respect to domain
variations, the expressions in Lemma~\ref{lem:basic1} depend on the eigenfuntion of the unperturbed
domain. Although this may make it slightly difficult to use in general, there are situations for which,
depending on the behaviour of the function $H$, it might be possible to derive simplified expressions
for these formulae. The case of spaces of constant curvature is one such example as we shall now see.

\begin{thm}\label{thm:hypsph}
The first eigenvalue of a ball of radius $r$ in $\hyp^{n}$ $(0<r)$ or $\sph^{n}$ $(0<r<\pi)$ satisfies
\[
\begin{array}{ll}
\frac{\ds j_{0,1}^2}{\ds r^2}+\frac{\ds 1}{\ds 4}\left[\frac{\ds 1}{\ds r^2}-\frac{\ds 1}{\ds \s^{2}(r)}
\pm 1\right]\leqslant\lambda(r)\leqslant \frac{\ds j_{0,1}^2}{\ds r^2}\pm\frac{\ds 1}{\ds 3} & (n=2)\eqskip
\lambda(r)=\frac{\ds \pi^2}{\ds r^2}\pm 1 & (n=3)\eqskip
\frac{\ds j_{\frac{n-2}{2},1}^2}{\ds r^2}\pm\frac{\ds n(n-1)}{\ds 6}\leqslant\lambda(r)\leqslant
\frac{\ds j_{\frac{n-2}{2},1}^2}{\ds r^2}\pm\frac{\ds (n-1)^2}{\ds 4} & \eqskip
\hspace*{5cm}+\frac{\ds (n-1)(n-3)}{\ds 4}\left[\frac{\ds 1}{\ds \s^{2}(r)}-\frac{\ds 1}{\ds r^2}\right] & (4\leqslant n),
\end{array}
\]
where $\s$ denotes $\sinh$ or $\sin$, respectively, and in all indicated $\pm$ the plus and
minus signs are to be considered for $\hyp^{n}$ and $\sph^{n}$, respectively.
\end{thm}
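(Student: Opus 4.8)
The plan is to apply Lemma~\ref{lem:basic1} with $r_0=0$, so that
\[
\lambda(r) = \frac{j_{\frac{n}{2}-1,1}^2}{r^2}
+ \frac{n-1}{2r^2}\dint_{0}^{r} \left[\frac{t \dint_{0}^{t}H(s) f^{n-1}(s)\psi^{2}(s)\di s}{\dint_{0}^{t} f^{n-1}(s)\psi^{2}(s)\di s}
\right]\di t,
\]
and exploit the fact that for the constant curvature spaces the function $H$ is either constant or monotone. Concretely, for $\hyp^n$ we have $f(t)=\sinh t$ and for $\sph^n$ we have $f(t)=\sin t$; in both cases $g=f'/f=\coth t$ or $\cot t$, and a direct computation gives $h(t)=g'+\tfrac{n-1}{2}g^2 = \mp 1 + \tfrac{n-1}{2}g^2$ (with the convention that the upper sign is for $\hyp^n$). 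One then computes $H(t)=th'(t)+2h(t)$ explicitly. The key observation is that $H$ splits into an elementary constant part plus a term proportional to $th(gg)' $-type expressions involving $g^2$ and $tg g'$; using $g'=\mp1-g^2$ (i.e.\ $\coth'=1-\coth^2$, $\cot'=-1-\cot^2$, which is exactly the Riccati identity behind~(\ref{2.0}) degenerating to constant curvature) one reduces everything to $g^2$ and powers of $t$ only.

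First I would record the explicit form of $H$ for each of $n=2$, $n=3$, and $n\geq4$. For $n=3$ the miracle is that $H$ is \emph{constant}: one finds $h(t)=\mp1+\cot^2\!/\coth^2$ multiplied by $1$, and after forming $th'+2h$ the $g$-dependence cancels identically, leaving $H\equiv \mp 2$ (the sign chosen so the final answer reads $\lambda(r)=\pi^2/r^2\pm1$, consistent with $j_{1/2,1}=\pi$). Since $H$ is constant, the inner fraction $\big(t\int_0^t H f^{n-1}\psi^2\big)/\big(\int_0^t f^{n-1}\psi^2\big) = Ht$, the outer integral is $H r^2/2$, and the correction term becomes exactly $\tfrac{n-1}{2r^2}\cdot\tfrac{H r^2}{2}=\tfrac{H}{2}=\pm1$, giving the identity with no eigenfunction dependence at all. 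For $n=2$ and $n\geq4$ the plan is instead to \emph{bound} the ratio
\[
R(t):=\frac{\dint_{0}^{t}H(s) f^{n-1}(s)\psi^{2}(s)\di s}{\dint_{0}^{t} f^{n-1}(s)\psi^{2}(s)\di s}
\]
by $\inf_{[0,r]}H \le R(t)\le \sup_{[0,r]}H$, which is immediate since $R(t)$ is a weighted average of $H$ over $[0,t]$. Then $\tfrac{n-1}{2r^2}\int_0^r tR(t)\di t$ lies between $\tfrac{n-1}{4}\inf H$ and $\tfrac{n-1}{4}\sup H$, and it remains only to identify $\inf H$ and $\sup H$ on the relevant interval.

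For $n\geq4$ the quantity $H(t)$, after simplification using the Riccati identity, will be of the form $H(t)= \pm\tfrac{n(n-1)}{2}$ plus a term that is a positive (resp.\ negative) multiple of $\big(\tfrac{1}{f^2(t)}-\tfrac{1}{t^2}\big)$ or an equivalent monotone expression; the monotone part is what produces the $\tfrac{1}{\s^2(r)}-\tfrac{1}{r^2}$ terms and the factor $(n-1)(n-3)/4$ in the upper bound, while its limit as $t\to0^+$ (where $1/f^2-1/t^2 \to 0$ by~(\ref{2.0}), in fact $\coth^2 t - 1/t^2\to 0$) gives the sharp constant $\pm n(n-1)/6$ in the lower bound. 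The case $n=2$ is analogous but here $(n-1)(n-3)/4 = -1/4$ changes the sign of the monotone contribution, which is why the explicit $\big[\tfrac1{r^2}-\tfrac1{\s^2(r)}\pm1\big]$ term appears inside the \emph{lower} bound and the clean $\pm\tfrac13$ (from $\tfrac{n(n-1)}{6}$ at $n=2$) in the upper bound; I would also use the extra Bessel identity $j_{0,1}=j_{n/2-1,1}$ at $n=2$ and $j_{\frac{n-2}{2},1}$ notation for general $n$.

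The main obstacle I expect is purely computational: deriving the clean closed form of $H(t)=th'(t)+2h(t)$ for general $n$ with $f=\sinh t$ or $\sin t$, and in particular verifying the exact cancellation that makes $H$ constant when $n=3$ and isolating precisely the monotone piece $\propto(1/f^2-1/t^2)$ for other $n$ so that its endpoint values reproduce the stated constants. One must be careful that the bound $\inf H\le R(t)\le \sup H$ is applied on $[0,r]$ and that the monotonicity of $t\mapsto 1/f^2(t)-1/t^2$ (increasing for $\sin$, decreasing for $\sinh$, both with limit $0$ at $0$) is correctly signed in each case; once $H$ is in hand, the remaining estimates are the elementary averaging argument above, and the asymptotic sharpness as $r\to0$ follows because the correction term then tends to $\tfrac{n-1}{4}\lim_{t\to0}H(t)$, which matches $-\tfrac16\mathcal S(p)$ via~(\ref{exprscalarcurv}).
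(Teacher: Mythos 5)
Your overall strategy is the right one and matches the paper: apply Lemma~\ref{lem:basic1} with $r_{0}=0$, observe that $H$ is constant for $n=3$ (giving an identity), and is monotone for $n\neq 3$ (giving two--sided bounds). The $n=3$ computation and the ``easy'' side of each bound (the lower bound for $n\geqslant 4$, the upper bound for $n=2$), which reduce to $\tfrac{n-1}{4}\lim_{t\to0}H(t)=\tfrac{n(n-1)}{6}$, are correct as you describe. However, there is a genuine gap on the other side.

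The averaging inequality you propose, $\inf_{[0,r]}H\leqslant R(t)\leqslant\sup_{[0,r]}H$ for all $t\in[0,r]$, is too coarse to reproduce the stated bounds: for $n\geqslant 4$ it would give the upper correction $\tfrac{n-1}{4}H(r)$, which contains the term $\tfrac{r\cosh r}{\sinh^{3}r}$ rather than $\tfrac{1}{r^{2}}$ and is strictly weaker than the theorem's bound. What the paper actually uses is the \emph{pointwise} bound $R(t)\leqslant H(t)$ (resp.\ $\geqslant$ when $H$ is decreasing), valid because $R(t)$ is a weighted average of $H$ over $[0,t]$ only, followed by an \emph{exact} evaluation of $\tfrac{n-1}{2r^{2}}\int_{0}^{r}tH(t)\,\mathrm{d}t$. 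That integral closes because, after the Riccati reduction, one has for $\hyp^{n}$
\[
H(t)=(n-1)+(n-3)\left[\frac{1}{\sinh^{2}t}-\frac{t\cosh t}{\sinh^{3}t}\right],
\qquad
tH(t)=(n-1)t+\frac{n-3}{2}\,\frac{d}{dt}\!\left(\frac{t^{2}}{\sinh^{2}t}\right),
\]
so the antiderivative is explicit and yields exactly $\tfrac{(n-1)^{2}}{4}+\tfrac{(n-1)(n-3)}{4}\bigl[\tfrac{1}{\sinh^{2}r}-\tfrac{1}{r^{2}}\bigr]$; the same structure gives the $n=2$ lower bound. This is quite different from ``identifying $\sup H$ on $[0,r]$''. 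In particular your claimed normal form $H(t)=\pm\tfrac{n(n-1)}{2}+C\bigl(\tfrac{1}{f^{2}(t)}-\tfrac{1}{t^{2}}\bigr)$ is not correct (for instance, $\lim_{t\to\infty}H(t)=n-1$ in $\hyp^{n}$, not $\tfrac{n(n-1)}{2}$); the factor $\tfrac{1}{\s^{2}(r)}-\tfrac{1}{r^{2}}$ appears only after the integration above, not as an endpoint evaluation of $H$. Also, a small but propagating sign slip: from $g'=\pm1-g^{2}$ (upper sign for $\hyp^{n}$) one gets $h=\pm1+\tfrac{n-3}{2}g^{2}$, not $\mp1+\tfrac{n-1}{2}g^{2}$; with the correct form, $H\equiv\pm 2$ for $n=3$ is derived rather than ``chosen to match''. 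Once you replace the uniform $\inf/\sup$ bound by the pointwise one and integrate $tH(t)$ exactly, the proof closes and coincides with the paper's.
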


\begin{rem}
In the case of the upper bounds in $\hyp^{n}$ $(n> 3)$, they approach $(n-1)^2/4$
as $r$ goes to infinity and are thus also asymptotically accurate in this limit.
The bounds for $\sph^{n}$ are not asymptotically accurate as $r$ approaches $\pi$ (except for $n=3$),
and indeed they may become quite poor in this limit.
\end{rem}

\begin{rem}
Note also that the above bounds are similar to those in Theorem 5.2 in~\cite{gage}. However,
the latter do not display the correct asymptotic behaviour as $r$ goes to zero.
\end{rem}
\begin{proof}
In the case of hyperbolic space with constant curvature $-1$, $f(t)=\sinh(t)$, and the function $H(t)= t h'(t)+2h(t)$ becomes
\[
H(t)=(n-1)\coth^{2}(t)-\frac{\ds 1}{\ds \sinh^2(t)}\left[2+(n-3)t \coth(t)\right].
\]
Its derivative may be written as
\[
H'(t)= \frac{\ds n-3}{\ds 2 \sinh^4(t)}\left[2t(2+\cosh(2t))-3\sinh(2t)\right],
\]
from which it follows that, for positive $t$, $H$ is decreasing for $n$ equal to two,
identically equal to $2$ for $n$ equal to three, and increasing for $n$ larger than three.
Using these monotonicity properties in Lemma~\ref{lem:basic1}, together with
\[
\lim_{t\to0}H(t)=\frac{\ds 2n}{\ds 3} \mbox{ and } \lim_{t\to\infty}H(t)=n-1,
\]
yields the bounds for $\hyp^{n}$ given in Theorem~\ref{thm:hypsph}.

Spheres with constant curvature $1$ correspond to $f(t)=\sin(t)$, and now
\[
H(t)=(n-1)\cot^{2}(t)-\frac{\ds 1}{\ds \sin^2(t)}\left[2+(n-3)t \cot(t)\right],
\]
while
\[
H'(t) = \frac{\ds n-3}{\ds 2 \sin^4(t)}\left[2t(2+\cos(2t))-3\sin(2t)\right].
\]
We again obtain that, for $0<t<\pi$, $H$ is decreasing when $n$ is two, constant ($-2$)
for $n$ equal to three and increasing for $n$ larger than $3$. This, together with
Lemma~\ref{lem:basic1}, yields the bounds for $\sph^{n}$ given in Theorem~\ref{thm:hypsph}.
\end{proof}

\section{Asymptotic expansion for small radius\label{asymptr0}}

In this section we consider the case of a small geodesic disk centred at the North Pole in the case where its
radius approaches zero. We assume $f$ to be sufficiently smooth for small $r$, and our aim
is to obtain the asymptotic expansion for $\l(r)$ and the associated eigenfunction $\psi$.

Our main result here is the following.

\begin{thm}\label{thmexpr0}
 The first eigenvalue of the geodesic disk centred at the North Pole $p$ and with radius $r$ satisfies
 \[
  \l(r) = \frac{\ds j_{\frac{n}{2}-1,1}^2}{\ds r^2}-\frac{\ds 1}{\ds 6}\mathcal{S}(p) + \left[ \alpha_{1}\mathcal{S}^{2}(p)+\alpha_{2}
\mathcal{S}''(p)\right]r^2 + \Odr(r^4),\quad
r\to0^+,
\]
where the coefficients $\alpha_{i},$ $i=1,2$ depend only on the dimension $n$ and are given by
\[
 \alpha_{1} = \frac{\ds c_{0}^2}{\ds 270 n^2(n-1)}\left[5\pi(n-1)(I_{2}+j_{\frac{n-2}{2},1} I_{3})-3(n+2)I_{1}\right]
\]
and
\[
 \alpha_{2}= -\frac{\ds c_{0}^2}{\ds 10}  I_{1},
\]
with
\[
 \begin{array}{l}
   c_{0} = \frac{\ds\sqrt{2}}{\ds\big|J_{\frac{n}{2}-1}'(j_{\frac{n}{2}-1,1})\big|}, \eqskip
  I_{1} = \dint\limits_{0}^{1} \xi^3 J_{\frac{n}{2}-1}^2(j_{\frac{n}{2}-1,1}\xi)\di\xi,    \eqskip
  I_{2} = \dint\limits_{0}^{1}\xi^3 J_{\frac{n}{2}-1}^3(j_{\frac{n}{2}-1,1}\xi) Y_{\frac{n}{2}-1}(j_{\frac{n}{2}-1,1}\xi)\di\xi,    \eqskip
  I_{3} = \dint\limits_{0}^{1} \xi^4  J_{\frac{n}{2}-1}^3(j_{\frac{n}{2}-1,1}\xi) Y_{\frac{n}{2}-1}'(j_{\frac{n}{2}-1,1}\xi)\di\xi.
 \end{array}
\]
\end{thm}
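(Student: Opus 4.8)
The plan is to treat equation~(\ref{eqn:basic}) as a singularly scaled eigenvalue problem and build the asymptotics by rescaling $t=r\xi$. Writing $\psi(t)=\Psi(\xi)$ with $\xi\in(0,1)$, the problem becomes
\[
-\bigl[f^{n-1}(r\xi)\Psi'(\xi)\bigr]'=r^2\lambda(r)\,f^{n-1}(r\xi)\Psi(\xi),\qquad \Psi'(0)=\Psi(1)=0,
\]
so the natural small parameter is $r$. Under the smoothness hypotheses on $f$ (all even derivatives vanish at $0$, $f'(0)=1$), one has an expansion $f(r\xi)=r\xi\bigl(1+a_2 r^2\xi^2+a_4 r^4\xi^4+\dots\bigr)$, where $a_2$ is proportional to $f'''(0)$ and, via~(\ref{exprscalarcurv}), to $\mathcal{S}(p)$; similarly $a_4$ brings in $f^{(5)}(0)$, which after differentiating~(\ref{exprscalarcurv}) twice is expressible through $\mathcal{S}(p)$, $\mathcal{S}''(p)$ and $\mathcal{S}^2(p)$. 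Consequently $f^{n-1}(r\xi)=(r\xi)^{n-1}\bigl(1+(n-1)a_2 r^2\xi^2+b_4 r^4\xi^4+\dots\bigr)$ with $b_4$ an explicit combination of $a_2^2$ and $a_4$. Substituting the ansatz
\[
\lambda(r)=\frac{\mu_0}{r^2}+\mu_2+\mu_4 r^2+\Odr(r^4),\qquad \Psi(\xi)=\Psi_0(\xi)+r^2\Psi_2(\xi)+r^4\Psi_4(\xi)+\Odr(r^6),
\]
into the rescaled equation and collecting powers of $r$ gives a recursive hierarchy of inhomogeneous Bessel-type boundary value problems on $(0,1)$.

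The leading order is the standard eigenvalue problem $-(\xi^{n-1}\Psi_0')'=\mu_0\xi^{n-1}\Psi_0$, whose first Dirichlet eigenpair is $\mu_0=j_{n/2-1,1}^2$ and $\Psi_0(\xi)=c_0\,\xi^{1-n/2}J_{n/2-1}(j_{n/2-1,1}\xi)$, normalized by the constant $c_0$ as in the statement. At each subsequent even order $2k$ one obtains an equation of the form $\LL_0\Psi_{2k}=F_{2k}$, where $\LL_0$ is the leading operator shifted by $\mu_0$, and $F_{2k}$ is assembled from lower-order $\Psi_j$, the metric coefficients $a_2,a_4$, and the unknown corrections $\mu_2,\dots,\mu_{2k}$. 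Since $\LL_0$ has $\Psi_0$ in its kernel (with the natural weight $\xi^{n-1}$), the Fredholm alternative forces a solvability condition at each order: $\mu_{2k}$ is determined by requiring $\langle F_{2k},\Psi_0\rangle_{w}=0$, i.e. $\mu_{2k}$ equals (a lower-order curvature-dependent quantity) plus a multiple of $\int_0^1 (\text{weighted lower-order data})\,\Psi_0^2$. The order-$r^2$ solvability condition reproduces $\mu_2=-\mathcal{S}(p)/6$, consistent with~\cite[p.\,318]{chav}; the order-$r^4$ condition yields $\mu_4=\alpha_1\mathcal{S}^2(p)+\alpha_2\mathcal{S}''(p)$ after expressing $a_2,a_4$ back in terms of $\mathcal{S}(p),\mathcal{S}''(p)$.

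Extracting the explicit constants $\alpha_1,\alpha_2$ is where the genuine computation lies. Here one must solve the order-$r^2$ correction equation for $\Psi_2$ explicitly: its forcing term involves $\xi^2\Psi_0$, and the particular solution is naturally written using the second Bessel solution $Y_{n/2-1}$ together with variation of parameters, which is precisely why the integrals $I_2,I_3$ (containing $Y_{n/2-1}$ and its derivative) appear alongside the elementary moment $I_1=\int_0^1\xi^3 J_{n/2-1}^2$. One then feeds $\Psi_0,\Psi_2$ into the order-$r^4$ solvability condition, integrates by parts to collapse several terms, and uses classical Bessel identities (the Wronskian $J_\nu Y_\nu'-J_\nu'Y_\nu=2/(\pi\xi)$ and the recurrences for $J_\nu',Y_\nu'$) to reduce everything to $I_1,I_2,I_3$ and the constant $c_0$. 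The main obstacle is organizing this order-$r^4$ bookkeeping: there are many contributions (from $b_4$ in the weight, from $\mu_2\Psi_2$, from the cross term $a_2\xi^2\Psi_2'$, etc.), and keeping the Bessel-function algebra under control so that the final answer collapses to the compact two-term form $\alpha_1\mathcal{S}^2(p)+\alpha_2\mathcal{S}''(p)$ requires care; the vanishing of all would-be odd-order terms (guaranteed by the assumed parity of $f$ at $0$) is what makes the expansion proceed in powers of $r^2$ and should be noted explicitly. Finally, rigor of the expansion — that the formal series is a genuine asymptotic one with $\Odr(r^4)$ remainder — follows from standard analytic perturbation theory for the simple, isolated first eigenvalue of $\HH_r$, using the variational characterization~(\ref{eqn:variational}) to control the error, much as in the proof of Lemma~\ref{lem:basic1}.
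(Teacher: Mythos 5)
Your proposal follows essentially the same route as the paper: rescale $t=r\xi$, expand the weight (equivalently the coefficient $f'(r\xi)/f(r\xi)$) in powers of $r^2$ using the parity of $f$ at $0$, identify the leading Bessel eigenpair, impose Fredholm solvability against $\Psi_0$ at each even order, build the first correction via the second Bessel solution $Y_{n/2-1}$, reduce with Bessel identities, and finally translate $f'''(0),f^{(5)}(0)$ into $\mathcal{S}(p),\mathcal{S}''(p)$. The only substantive difference is in how rigor is established: the paper proves an $\HH_*$-relative-boundedness estimate (their Lemma on $f_0(r\cdot)\,u'$) and invokes analytic perturbation theory to get a genuinely convergent series in $r^2$ with $r$-dependent coefficients, which it then re-expands, whereas you appeal more loosely to the variational characterization and the simplicity of the eigenvalue; your route would also work but would need a quantitative estimate to pin down the $\Odr(r^4)$ remainder.
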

\begin{rem}
{\rm
The integrals $I_{k}$ ($k=2,3$) above are, in general, not computable in terms of known constants. However, in
dimension three it is possible to carry out all the computations explicitly to obtain that $\alpha_{1}$ vanishes in that case
and thus
\[
 \l(r) = \frac{\ds \pi^2}{\ds r^2}-\frac{\ds 1}{\ds 6}\mathcal{S}(p) + \left( \frac{\ds 1}{\ds 2\pi^2}-\frac{\ds 1}{\ds 3}\right)
 \frac{\ds \mathcal{S}''(p)}{\ds 10}r^2+ \Odr(r^4).
\]
}
\end{rem}

\begin{rem}
Although the above theorem provides only three terms in the asymptotics for the eigenvalue, our technique allows us to construct the complete asymptotic expansion, cf. equations (\ref{5.7}), (\ref{5.9}), (\ref{5.10}), (\ref{5.12}), (\ref{5.16}), (\ref{5.17}) below. And in the theorem we calculate explicitly in a convenient form only first three terms just to demonstrate the result.
\end{rem}

In order to handle the situation where $r\to0^+$,
it is natural to introduce a rescaled variable $\xi:=t r^{-1}$ and to rewrite the eigenvalue
problem (\ref{2.0}) as
\begin{align}\label{5.2}
-&\frac{d^2 \psi}{d\xi^2}-(n-1)r\frac{f'(r\xi)}{f(r\xi)}\frac{d \psi}{d\xi}=r^2\l \psi\quad \text{in}\quad (0,1),
\\
&\psi'(0,r)=\psi(1,r)=0.\label{5.5}
\end{align}
In view of (\ref{2.0}) we see that the coefficient at the first derivative behaves as
\begin{equation*}
r\frac{f'(r\xi)}{f(r\xi)}=\frac{1}{\xi}+r^2 f_0(r\xi),
\end{equation*}
where the function $f_0(t)$ is bounded uniformly in $[0,r_0]$. We substitute the last identity into (\ref{5.2}). It leads us to the equation
\begin{equation}
-\frac{d^2}{d\xi^2}-\frac{n-1}{\xi}\frac{d \psi}{d\xi}-(n-1)r^2f_0(r\xi) \frac{du}{d\xi}=r^2\l \psi\quad \text{in}\quad (0,1),
\label{5.4}
\end{equation}
with  boundary conditions (\ref{5.5}).

We introduce Hilbert spaces $Y_0$ and $Y_1$,
\begin{align*}
&Y_0:=\{u\in L_{2,loc}(0,1): \|u\|_{Y_0}<\infty\},\quad Y_1:=\{u\in \Hloc^1(0,1): \|u\|_{Y_1}<\infty\},
\\
&(u,v)_{Y_0}:=\int\limits_{0}^{1}\xi^{n-1}u\overline{v}\di\xi,\quad
(u,v)_{Y_1}^2:=(u',v')_{Y_0}+(u,v)_{Y_0}.
\end{align*}

We let
\begin{equation*}
\HH_*:=-\frac{d^2}{d\xi^2}-\frac{n-1}{\xi}\frac{d}{d\xi}\quad\text{on}\quad (0,1)
\end{equation*}
with the boundary conditions (\ref{5.5}). More precisely,
operator $\HH_*$ is understood as the associated one with the quadratic form
$h_0[u]:=\|u'\|_{Y_0}^2$ on $Y_1$. By $\Dom(\HH_*)$ we denote the domain of
operator $\HH_*$.

\begin{lemma}\label{lm5.1}
For each $u\in\Dom(\HH_*)$ the estimate
\begin{equation*}
\|f_0(r\,\cdot)u'\|_{Y_0}\leqslant C\big(\|\HH_* u\|_{Y_0}+\|u\|_{Y_0}\big)
\end{equation*}
holds true, where the constant $C$ is independent of $u$.
\end{lemma}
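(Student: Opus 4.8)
The plan is to reduce the claimed bound to a standard elliptic regularity estimate for the model operator $\HH_*$, which is essentially a radial Bessel-type operator on the unit ball. First I would observe that $f_0$ is bounded uniformly on $[0,r_0]$ by hypothesis (this is exactly the property established just before the lemma when extracting $rf'(r\xi)/f(r\xi)=\xi^{-1}+r^2 f_0(r\xi)$), so there is a constant $C_0$ with $|f_0(r\xi)|\leqslant C_0$ for all $\xi\in(0,1)$ and all $r\in[0,r_0]$. Consequently
\[
\|f_0(r\,\cdot)u'\|_{Y_0}^2=\int_0^1 \xi^{n-1}|f_0(r\xi)|^2|u'(\xi)|^2\di\xi\leqslant C_0^2\|u'\|_{Y_0}^2=C_0^2\, h_0[u],
\]
so the whole matter is reduced to bounding the form $h_0[u]=\|u'\|_{Y_0}^2$ by $\|\HH_* u\|_{Y_0}^2+\|u\|_{Y_0}^2$ for $u\in\Dom(\HH_*)$.

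The second step is to establish that latter bound. Since $\HH_*$ is the self-adjoint operator associated with the closed, lower-semibounded form $h_0$ on $Y_1$, for $u\in\Dom(\HH_*)$ one has the identity
\[
h_0[u]=(u',u')_{Y_0}=(\HH_* u,u)_{Y_0}\leqslant \|\HH_* u\|_{Y_0}\,\|u\|_{Y_0}\leqslant \tfrac12\|\HH_* u\|_{Y_0}^2+\tfrac12\|u\|_{Y_0}^2,
\]
using the Cauchy--Schwarz inequality in $Y_0$ together with Young's inequality; the first equality here is just the definition of the operator associated with a quadratic form (the boundary term $\xi^{n-1}u'\bar u$ arising in the integration by parts vanishes at $\xi=1$ by the condition $u(1)=0$ and at $\xi=0$ because of the weight $\xi^{n-1}$ and the Neumann-type behaviour built into $\Dom(\HH_*)$). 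Combining the two displays gives
\[
\|f_0(r\,\cdot)u'\|_{Y_0}^2\leqslant C_0^2\Big(\tfrac12\|\HH_* u\|_{Y_0}^2+\tfrac12\|u\|_{Y_0}^2\Big)\leqslant C_0^2\big(\|\HH_* u\|_{Y_0}+\|u\|_{Y_0}\big)^2,
\]
and taking square roots yields the assertion with $C=C_0$, which is independent of $u$ (and in fact of $r$, uniformly for $r\in[0,r_0]$).

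The one point that deserves genuine care — and which I expect to be the main obstacle in writing this rigorously — is the vanishing of the boundary contribution at the singular endpoint $\xi=0$ when one identifies $(\HH_* u,u)_{Y_0}$ with $\|u'\|_{Y_0}^2$. The operator $\HH_*$ has the coefficient $(n-1)/\xi$ which is singular at the origin, so elements of $\Dom(\HH_*)$ must be characterised carefully: one should argue that $u\in\Dom(\HH_*)$ forces $\xi^{(n-1)/2}u'\in L_2$ near $0$ with $\xi^{n-1}u'(\xi)\to0$ as $\xi\to0^+$, which is the correct regularity coming from the form domain $Y_1$ and elliptic regularity for the associated ODE away from $\xi=1$. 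This is standard for Bessel-type operators and for the radial Laplacian on a ball, and can be handled either by an explicit approximation argument (mollifying and cutting off near $0$) or by invoking the known description of $\Dom(\HH_*)$; once this is in place the rest is the elementary estimate above.
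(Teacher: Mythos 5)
Your argument is essentially the paper's proof, spelled out in more detail: the paper likewise reduces the claim to the two estimates $\|f_0(r\,\cdot)u'\|_{Y_0}^2\leqslant C\,h_0[u]$ (from the uniform boundedness of $f_0$) and $h_0[u]=(\HH_*u,u)_{Y_0}\leqslant\tfrac12\big(\|\HH_*u\|_{Y_0}^2+\|u\|_{Y_0}^2\big)$ (from the form identity plus Cauchy--Schwarz and Young). Your extra discussion of the boundary contribution at the singular endpoint $\xi=0$ is a reasonable point of care but is implicit in the paper's invocation of the form-operator correspondence, so the two proofs coincide in substance.
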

\begin{proof}
The statement of the lemma follows easily from the two estimates
\begin{equation*}
\begin{array}{lcl}
h_0[u]=(\HH_*u,u)_{Y_0}\leqslant \frac{1}{2} \big(\|\HH_* u\|_{Y_0}^2+\|u\|_{\HH_*}^2\big) &
 \mbox{ and } & \quad \|f_0(r\,\cdot) u'\|_{Y_0}\leqslant C h_0[u],
 \end{array}
\end{equation*}
valid for each $u\in\Dom(\HH_*)$.
\end{proof}

The last lemma implies that the operator $(n-1) f_0(r\xi)\frac{d}{d\xi}$ is $\HH_*$-bounded with the bounds independent of $r$. Hence, we can consider
problem (\ref{5.4}), (\ref{5.5}) as a small perturbation of the eigenvalue problem for $\HH_*$. Therefore,
eigenvalue $\l(r)$ and the
associated eigenfunction can be represented as the convergent series
\begin{equation}\label{5.7}
\l(r)=r^{-2}\sum\limits_{j=0}^{\infty} r^{2j} \l_j,\quad \psi(\xi,r)=\sum\limits_{j=0}^{\infty}r^{2j} \psi_j(\xi,r),
\end{equation}
where the latter converges in $Y_1$. We substitute these series into (\ref{5.4}) and (\ref{5.5}) to obtain the following equations for $\psi_j$,
\begin{equation}\label{5.9}
\HH_* \psi_0=\l_0 \psi_0 \mbox{ and } \quad
(\HH_*-\l_0) \psi_j=\l_j \psi_0+\sum\limits_{k=1}^{j-1} \l_k \psi_{j-k} + (n-1)f_0(r\,\cdot) \psi'_{j-1}.
\end{equation}
The ground state of $\HH_*$ is expressed in terms of the Bessel function of the first kind,
\begin{equation}\label{5.10}
\psi_0(\xi)=c_0 \xi^{-\frac{n}{2}+1}J_{\frac{n}{2}-1}(j_{\frac{n}{2}-1,1}\xi),\quad
\quad \l_0=j_{\frac{n}{2}-1,1}^2,
\end{equation}
where $c_0$ is the normalization constant given in Theorem~\ref{thmexpr0} ensuring
\begin{equation}\label{5.12}
\|\psi_0\|_{Y_0}=1.
\end{equation}

Consider
problem (\ref{5.9}) for $\psi_1$,
\begin{equation}\label{5.13}
(\HH_*-\l_0) \psi_1=\l_1 \psi_0+ (n-1)f_0(r\,\cdot) \psi_0'.
\end{equation}
The solvability condition to this problem is the orthogonality of the right hand side to $\psi_0$ in  $Y_0$. With (\ref{5.12}) taken into
consideration this determines $\l_1$ which is thus given by
\begin{equation}\label{5.14}
\l_1=-(n-1)(f_0(r\,\cdot) \psi_0',\psi_0)_{Y_0}.
\end{equation}
The solution to~(\ref{5.13}) exists and is defined up to an additive term $Cu_0$, which may be determined
by the orthogonality condition
\begin{equation}\label{5.15}
(\psi_1,\psi_0)_{Y_0}=0.
\end{equation}

In the same way we solve the succesion of
the problems for $\psi_j$, $j\geqslant 2$. The
corresponding equations~(\ref{5.9}) for $\psi_j$ are solvable,
if their right--hand sides are orthogonal to $\psi_0$ in $Y_0$. These solvability conditions then yield the formulae for $\l_j$,
\begin{equation}\label{5.16}
\l_j=-(n-1) (f_0(r\,\cdot)\psi'_{j-1},\psi_0)_{Y_0}.
\end{equation}
Here we have used that all the functions $\psi_j$ are defined modulo an additive term of the form $C \psi_0$, which are determined as above from the
orthogonality conditions
\begin{equation}\label{5.17}
(\psi_j,\psi_0)_{Y_0}=0.
\end{equation}
In this way all the coefficients of
series (\ref{5.9}) are uniquely determined. One can also check by
induction that these coefficients are bounded uniformly in $r$ (the coefficients $\psi_j$ in $Y_1$-norm).

The coefficients of series (\ref{5.7}) depend on $r$ through the coefficient $f_0(r\xi)$ in~(\ref{5.4}). On the other hand, if the
function $f$ is smooth enough, we can simplify the asymptotic expansion. More precisely, we can replace $f_0(r\xi)$ by its Taylor formula
as $r\to0^+$ and substitute it into~(\ref{5.9}),~(\ref{5.13}),~(\ref{5.14}),~(\ref{5.15}),
(\ref{5.16}) and~(\ref{5.17}), leading to another asymptotic expansion in terms of powers of $r$. We employ this fact to
construct leading terms of the asymptotics in a more explicit form than (\ref{5.7}). Namely, we assume $f\in C^5[0,r_0]$ and $f^{(2)}(0)=0$ with, as
mentioned in Section~\ref{notprelim}, the latter identity reflecting the smoothness of the manifold at the pole. Hence
\begin{equation}\label{5.21}
f_0(r\xi)= \frac{f'''(0)}{3}\xi + r^2\frac{3f^{(5)}(0)-5(f'''(0))^2}{90}\xi^3
+\Odr(r^3),\quad r\to0^+,
\end{equation}
uniformly in $\xi\in[0,1]$. We substitute this identity into~(\ref{5.14}),
\begin{equation}\label{5.18}
\begin{aligned}
\l_1(r)=&-\frac{(n-1)f'''(0)}{3} (\xi\psi_0',\psi_0)_{Y_0}
\\
&-r^2\frac{(n-1)\big(3f^{(5)}(0)-5(f'''(0))^2\big)}{90} (\xi^3\psi_0',\psi_0)_{Y_0}+\Odr(r^3).
\end{aligned}
\end{equation}
Integrating by parts and using~(\ref{5.12}) we get
\begin{equation}\label{5.19}
\begin{aligned}
& (\xi\psi_0',\psi_0)_{Y_0}=\int\limits_{0}^{1} \xi^n\psi_0'(\xi)\psi_0(\xi)\di\xi=-\frac{n}{2}
\int\limits_{0}^{1}\xi^{n-1}\psi_0^2(\xi)\di\xi=-\frac{n}{2},
\\
&(\xi^3\psi_0',\psi_0)_{Y_0}=\int\limits_{0}^{1} \xi^{n+2}\psi_0'(\xi)\psi_0(\xi)\di\xi=-\frac{n+2}{2} \int\limits_{0}^{1}\xi^{n+1}\psi_0^2(\xi)\di\xi
\\
&\hphantom{(\xi^2\psi_0',\psi_0)_{Y_0}}= - \frac{(n+2)c_0^2}{2} \int\limits_{0}^{1} s^3 J_{\frac{n}{2}-1}^2(j_{\frac{n}{2}-1,1}s)\di s.
\end{aligned}
\end{equation}
Then the right hand side in (\ref{5.13}) casts into the form
\begin{equation*}
\l_1(r)\psi_0(\xi)+(n-1)f_0(r\xi) \psi_0'(\xi) = \frac{(n-1)f'''(0)}{3} \left(\frac{n}{2}\psi_0(\xi)+\xi\psi_0'(\xi) \right) + \Odr(r^2),\quad r\to0^+,
\end{equation*}
uniformly in $\xi\in[0,1]$,
and the function $\psi_1$ may then be represented as
\begin{equation}\label{5.20}
\psi_1(\xi,r)=\frac{(n-1)f'''(0)}{3}\Psi_1(\xi,\r)+\Odr(r^2),\quad r\to0^+,
\end{equation}
uniformly in $\xi\in[0,1]$, where $\Psi_1$ is the solution to the equation
\begin{equation*}
(\HH_*-j_{\frac{n}{2}-1,1}^2) \psi_1=\frac{n}{2}\psi_0+\xi\psi_0'
\end{equation*}
satisfying the orthogonality condition $(\Psi_1,\psi_0)_{Y_0}=0$. The function $\Psi_1$ can be found explicitly,
\begin{align*}
&\Psi_1(\xi)=\frac{\pi c_0}{2}\xi^{-\frac{n}{2}+1} \widetilde{\Psi}_1(\xi),
\\
&\widetilde{\Psi}_1(\xi):=\frac{\xi^2}{2} J_{\frac{n}{2}-1}^2(j_{\frac{n}{2}-1,1}\xi) Y_{\frac{n}{2}-1}(j_{\frac{n}{2}-1,1}\xi) -
J_{\frac{n}{2}-1}(j_{\frac{n}{2}-1,1}\xi) \widehat{\Psi}_1(\xi)-c_1c_0^2 J_{\frac{n}{2}-1}(j_{\frac{n}{2}-1,1}\xi),
\\
&\widehat{\Psi}_1(\xi):=\int\limits_{0}^{\xi} s J_{\frac{n}{2}-1}(j_{\frac{n}{2}-1,1}s)(s Y_{\frac{n}{2}-1}(j_{\frac{n}{2}-1,1}s))'\di s,
\\
&c_1:=\int\limits_{0}^{1} \left(\frac{\xi^3}{2}J_{\frac{n}{2}-1}^3(j_{\frac{n}{2}-1,1}\xi)  Y_{\frac{n}{2}-1}(j_{\frac{n}{2}-1,1}\xi) - \xi
J_{\frac{n}{2}-1}
^2(j_{\frac{n}{2}-1,1}\xi) \widehat{\Psi}_1(\xi)\right)\di\xi,
\end{align*}
where $Y_{\frac{n}{2}-1}$ is the Bessel function of second kind. This identity together with~(\ref{5.20}),~(\ref{5.16}) and~(\ref{5.21}) yield
\begin{equation}\label{5.22}
\l_2(r)=-\frac{(n-1)^2(f'''(0))^2 c_0 }{9} \int\limits_{0}^{1} \xi^{\frac{n}{2}+1} J_{\frac{n}{2}-1}(j_{\frac{n}{2}-1,1}\xi)
\Psi_1'(\xi)\di\xi+\Odr(r^2).
\end{equation}
Integrating by parts, we obtain
\begin{align*}
\int\limits_{0}^{1}&\xi^{\frac{n}{2}+1} J_{\frac{n}{2}-1}(j_{\frac{n}{2}-1,1}\xi) \Psi'_1(\xi)\di\xi
\\
=&-\left(\frac{n}{2}+1\right) \int\limits_{0}^{1} \left(\frac{\xi^3}{2} J_{\frac{n}{2}-1}^3(j_{\frac{n}{2}-1,1}\xi)
Y_{\frac{n}{2}-1}(j_{\frac{n}{2}-1,1}\xi) - \xi J_{\frac{n}{2}-1}^2(j_{\frac{n}{2}-1,1}\xi) \widehat{\Psi}_1(\xi) \right)\di\xi
\\
&-j_{\frac{n}{2}-1,1}\int\limits_{0}^{1} \bigg(\frac{\xi^4}{2}J^2_{\frac{n}{2}-1}(j_{\frac{n}{2}-1,1}\xi) J'_{\frac{n}{2}-1}(j_{\frac{n}{2}-1,1}\xi)
Y_{\frac{n}{2}-1}(j_{\frac{n}{2}-1,1}\xi)
\\
&\hphantom{-j_{\frac{n}{2}-1,1}\int\limits_{0}^{1} \bigg(} -\xi^2
J_{\frac{n}{2}-1}(j_{\frac{n}{2}-1,1}\xi) J'_{\frac{n}{2}-1}(j_{\frac{n}{2}-1,1}\xi) \widehat{\Psi}_1(\xi)\bigg)\di\xi
\\
&+c_1c_0^2\int\limits_{0}^{1}\xi  J_{\frac{n}{2}-1}(j_{\frac{n}{2}-1,1}\xi)
\left(\left(\frac{n}{2}+1\right) J_{\frac{n}{2}-1}(j_{\frac{n}{2}-1,1}\xi) + j_{\frac{n}{2}-1,1}\xi
J'_{\frac{n}{2}-1}(j_{\frac{n}{2}-1,1}\xi)\right)\di\xi.
\end{align*}
Since
\begin{align*}
&-j_{\frac{n}{2}-1,1}\int\limits_{0}^{1} \frac{\xi^4}{2}J^2_{\frac{n}{2}-1}(j_{\frac{n}{2}-1,1}\xi) J'_{\frac{n}{2}-1}(j_{\frac{n}{2}-1,1}\xi)
Y_{\frac{n}{2}-1}(j_{\frac{n}{2}-1,1}\xi)\di\xi
\\
&\hphantom{j_{\frac{n}{2}-1,1}}=\frac{1}{6} \int\limits_{0}^{1}
J^3_{\frac{n}{2}-1}(j_{\frac{n}{2}-1,1}\xi)   \big( \xi^4
Y_{\frac{n}{2}-1}(j_{\frac{n}{2}-1,1}\xi)
\big)'\di\xi,
\\
&j_{\frac{n}{2}-1,1}\int\limits_{0}^{1} \xi^2 J_{\frac{n}{2}-1}(j_{\frac{n}{2}-1,1}\xi)
J'_{\frac{n}{2}-1}(j_{\frac{n}{2}-1,1}\xi)\widehat{\Psi}_1(\xi)\di\xi
\\
&\hphantom{j_{\frac{n}{2}-1,1}}= - \frac{1}{2} \int\limits_{0}^{1} \xi^3
J_{\frac{n}{2}-1}^3(j_{\frac{n}{2}-1,1}\xi)  \big( \xi
Y_{\frac{n}{2}-1}(j_{\frac{n}{2}-1,1}\xi)
\big)'
-\int\limits_{0}^{1} \xi J_{\frac{n}{2}-1}^2(j_{\frac{n}{2}-1,1}\xi) \widehat{\Psi}_1(\xi)\di\xi,
\\
&\int\limits_{0}^{1} \xi J_{\frac{n}{2}-1}^2(j_{\frac{n}{2}-1,1}\xi)\di\xi=\frac{1}{c_0^2},
\\
&\int\limits_{0}^{1}j_{\frac{n}{2}-1,1}\xi^2 J_{\frac{n}{2}-1}(j_{\frac{n}{2}-1,1}\xi)
J'_{\frac{n}{2}-1}(j_{\frac{n}{2}-1,1}\xi)\di\xi=-\int\limits_{0}^{1} \xi J_{\frac{n}{2}-1}^2(j_{\frac{n}{2}-1,1}\xi) \di\xi=-\frac{1}{c_0^2},
\end{align*}
we finally get
\begin{align*}
\int\limits_{0}^{1}\xi^{\frac{n}{2}+1} J_{\frac{n}{2}-1}(j_{\frac{n}{2}-1,1}\xi) \Psi'_1(\xi)\di\xi =& -\frac{\pi c_0}{6} \int\limits_{0}^{1}
\xi^3 J_{\frac{n}{2}-1}^3(j_{\frac{n}{2}-1,1}\xi) Y_{\frac{n}{2}-1}(j_{\frac{n}{2}-1,1}\xi)\di\xi
\\
&-\frac{\pi c_0}{6} \int\limits_{0}^{1} \xi^4  J_{\frac{n}{2}-1}^3(j_{\frac{n}{2}-1,1}\xi) Y_{\frac{n}{2}-1}'(j_{\frac{n}{2}-1,1}\xi)\di\xi.
\end{align*}
Formulae~(\ref{5.18}),~(\ref{5.19}),
and~(\ref{5.22}) yield the desired asymptotics for $\l(r)$,
\begin{align*}
&\l(r)=\frac{\ds j_{\frac{n}{2}-1,1}^{2}}{r^2}+\frac{n(n-1)f'''(0)}{6}+r^2\widetilde{\l}_2+\Odr(r^4),
\\
&\widetilde{\l}_2=\frac{(n-1)(n+2)\big(3f^{(5)}(0)-5(f'''(0))^2\big)c_0^2}{180}
\int\limits_{0}^{1} \xi^3 J_{\frac{n}{2}-1}^2(j_{\frac{n}{2}-1,1}\xi)\di\xi
\\
&\hphantom{\widetilde{\l}_2=}+\frac{\pi(n-1)^2(f'''(0))^2c_0^2}{54} \Bigg( \int\limits_{0}^{1}
\xi^3 J_{\frac{n}{2}-1}^3(j_{\frac{n}{2}-1,1}\xi) Y_{\frac{n}{2}-1}(j_{\frac{n}{2}-1,1}\xi)\di\xi
\\
&\hphantom{\widetilde{\l}_2=\Bigg(}
 + j_{\frac{n}{2}-1,1} \int\limits_{0}^{1} \xi^4  J_{\frac{n}{2}-1}^3(j_{\frac{n}{2}-1,1}\xi) Y_{\frac{n}{2}-1}'(j_{\frac{n}{2}-1,1}\xi)\di\xi\Bigg).
\end{align*}

>From the expression for the scalar curvature at a point $p$ given by~\eqref{exprscalarcurv}
we obtain that at the North Pole ($t=0$) this becomes $\mathcal{S}(p) = -n(n-1)f^{(3)}(0)$, while $\mathcal{S}'(p)=0$ and
\[
 \mathcal{S}''(p) = \frac{\ds (n+2)(n-1)}{\ds 6}\left[ \left(f^{(3)}(0)\right)^{2}-f^{(5)}(0)\right].
\]
Solving this for $f^{(3)}$ and $f^{(5)}$ and substituting above yields the expressions for $\alpha_{1}$ and $\alpha_{2}$ in Theorem~\ref{thmexpr0}.

\section{Asymptotic expansion for maximal radius\label{asymptrmax}}

In this section  we consider the case of a compact manifold with the function $f$ satisfying
\begin{equation}\label{5.1}
f(R)=0,\quad f'(R)=A<0, \quad f''(R)=0.
\end{equation}
In contrast to the previous section, here we treat the case of the disk being close to the whole manifold, namely, $r\to R^{-}$. Note that
since the disk is centred at the North pole, we have that the maximal radius $R$ equals the diameter of the manifold.

Our aim is to construct the asymptotic expansion for $\l(r)$ as $r\to R^{-}$. And the main result reads as follows.

\begin{thm}\label{th5.1}
The first eigenvalue of the geodesic disk centered at the North Pole $p$ and with radius $r$ satisfies as $r\to R^{-}$
\begin{equation}\label{5.45}
\begin{aligned}
\l(r)=&\frac{V(r)(2-n)}{B_{n,1}^{(1)}}(r-R)^{n-2} \Bigg(
1- \frac{B_{n,2}^{(1)}(2-n)}{B_{n,1}^{(1)}(4-n)}(r-R)^2
\\
&+ \Bigg( \Bigg(\frac{B_{n,2}^{(1)}(2-n)}{B_{n,1}^{(1)}(4-n)}\Bigg)^2
-\frac{B_{n,3}^{(1)}(2-n)}{B_{n,1}^{(1)}(6-n)}
\Bigg)(r-R)^4
\Bigg)
\\
&+\Odr\big((R-r)^{-n+3}\big),
\end{aligned}
\end{equation}
for $n\geqslant 7$,
\begin{equation}\label{5.46a}
\begin{aligned}
\l(r)=&-\frac{4 V(R)}{B_{6,1}^{(1)}}(r-R)^4 \Bigg[1-\frac{2 B_{6,2}^{(1)}}{B_{6,1}^{(1)}}(r-R)^2+ \frac{4 B_{6,3}^{(1)}}{B_{6,1}^{(1)}}(r-R)^4\ln(R-r)
\\
&+ \Bigg(
\frac{2B_{6,2}^{(1)}}{B_{6,1}^{(1)}}
\Bigg)^2(r-R)^4\Bigg]+\Odr\big((R-r)^9\big)
\end{aligned}
\end{equation}
for $n=6$,
\begin{equation}
\label{5.49}
\begin{aligned}
\l(r)=&-\frac{3V(R)}{B_{5,1}^{(1)}} (r-R)^3 + \frac{9V(R)B_{5,2}^{(1)}}{\big(B_{5,1}^{(1)}\big)^2}(r-R)^5
\\
& + \left(B_1^{(2)}-\frac{9 V(R) B_0^{(2)}}{\big(B_{5,1}^{(1)}\big)^2}
\right)(r-R)^6 + \Odr\big((R-r)^7\big)
\end{aligned}
\end{equation}
for $n=5$,
\begin{equation}
\begin{aligned}
\l(r)=&-\frac{2V(R)}{B_{4,1}^{(1)}} (r-R)^2 - \frac{4 V(R) B_{4,2}^{(1)}}{\big(B_{4,1}^{(1)}\big)^2}(r-R)^4\ln(R-r)
\\
&+ \left(B_3^{(2)}-\frac{4V(R)B_2^{(2)}}{\big(B_{4,1}^{(1)}\big)^2}\right) (r-R)^4+\Odr\big((R-r)^5\ln^2(R-r)\big)
\end{aligned}\label{5.52a}
\end{equation}
for $n=4$,
\begin{equation}
\begin{aligned}
\l(r)=&-\frac{V(R)}{B_{3,1}^{(1)}}(r-R)+ \left(B_6^{(2)}- \frac{V(R)B_4^{(2)}}{\big(B_{3,1}^{(1)}\big)^2}\right) (r-R)^2
 \\
& + (B_5^{(2)}+B_7^{(2)}+B_{10}^{(2)}) (r-R)^3+ \Odr\big((R-r)^4\big)
\end{aligned}\label{5.60a}
\end{equation}
for $n=3$,
\begin{equation}
\begin{aligned}
\l(r)=& \frac{V(R)}{B_{2,1}^{(1)}}\ln^{-1}(R-r) +\left(\frac{B_{12}^{(2)}V^2(R)}{\big(B_{2,1}^{(1)}\big)^3}- \frac{ B_{11}^{(2)}V(R)}{\big(B_{2,1}^{(1)}\big)^2}\right) \ln^{-2}(R-r)
\\
& + \left( \frac{B_{13}^{(2)} V^2(R)}{\big(B_{2,1}^{(1)}\big)^3}+ \frac{\big(B_{11}^{(2)}\big)^2 V(R)}{\big(B_{2,1}^{(1)}\big)^3}- 2 \frac{\big(B_{12}^{(2)}\big)^2V^4(R)} {\big(B_{2,1}^{(1)}\big)^6}
 \right.
\\
 &\left.- \frac{B_{11}^{(2)} B_{12}^{(2)} V^2(R)}{\big( B_{2,1}^{(1)}\big)^4}+B_{14}^{(2)}
\right)\ln^{-3}(R-r)
+\Odr\big(\ln^{-4}(R-r)\big)
\end{aligned}\label{5.65}
\end{equation}
for $n=2$. Here the constants are given by the formulae
\begin{align*}
& B_{n,1}^{(1)}:=\frac{V^2(R)}{\om_{n-1} A^{n-1}},\quad B_{n,2}^{(1)}:=-\frac{A_2 V^2(R)}{\om_{n-1}A^{n-1}},\quad B_{n,3}^{(1)}:=\frac{(A_2^2-A_4)V^2(R)}{\om_{n-1}A^{n-1}},
\\
& A_2:=\frac{(n-1)f'''(0)}{3! A^{n-1}},
\quad
A_4:=(n-1)\left( \frac{f^{(5)}(0)}{5!} + \frac{(n-2)\big(f'''(0)\big)^2}{12}
\right),
\\
&B_0^{(2)}:=\int\limits_{0}^{R} \left(\frac{V^2(t)}{V'(t)}-
B_{5,1}^{(1)}(t-R)^{-4} - B_{5,2}^{(1)} (t-R)^{-2}\right)\di t - \frac{B_{5,1}^{(1)}}{3R^3}-\frac{B_{5,2}^{(1)}}{R},
\\
&B_1^{(2)}:=-\frac{9 V^3(R)}{\om_4\big(B_{5,1}^{(1)}\big)^3 A^4} \int\limits_{0}^{R} \frac{V(s)-V(R)}{V(s)}V(s)\di s,
\\
&B_2^{(2)}:=\int\limits_{0}^{R} \left(\frac{V^2(t)}{V'(t)}- B_{4,1}^{(1)}(t-R)^{-3}-B_{4,2}^{(1)}(t-R)^{-1}\right)\di t + \frac{B_{4,1}^{(1)}}{2R^2}-B_{4,2}^{(1)}\ln R,
\\
&B_3^{(2)}:=\int\limits_{0}^{R}\frac{V(s)-V(R)}{V'(s)}V(s)\di s,
\\
&B_4^{(2)}:=\int\limits_{0}^{R} \left(\frac{V^2(t)}{V'(t)}- B_{3,1}^{(1)}(t-R)^{-2}\right)\di t-B_{3,1}^{(1)}R^{-1},
\\
&B_5^{(2)}:=\frac{\om_2 A^2\big(B_{-2}^{(1)}\big)^3-3 V(R) B_{3,2}^{(1)}B_{3,1}^{(1)}-3 V(R)\big(B_4^{(2)}\big)^2}{3\big(B_{3,1}^{(1)}\big)^3},
\\
&B_6^{(2)}:=-\frac{B_8^{(2)}V^2(R)}{\big(B_{3,1}^{(1)}\big)^3},
\quad B_8^{(2)}:=\frac{V(R)}{\om_2 A^2}\int\limits_{0}^{R} \frac{V(s)-V(R)}{V'(s)}V(s)\di s,
\\
&B_7^{(2)}:=-\frac{2V^2(R)B_4^{(2)}B_8^{(2)}}{\big(B_{3,1}^{(1)}\big)^4} -\left(\frac{B_9^{(2)}}{B_{3,1}^{(1)}}+ \frac{B_4^{(2)}B_8^{(2)}}{\big(B_{3,1}^{(1)}\big)^2}
\right)\frac{V^2(R)}{\big(B_{3,1}^{(1)}\big)^2},
\\
&B_9^{(2)}:=\int\limits_{0}^{R} \frac{V(t)}{V'(t)} \left( \int\limits_{0}^{t} \frac{V(s)-V(t)}{V'(s)}V(s)\di s-B_8^{(2)}(t-R)^{-2}
\right)\di t + \frac{B_8^{(2)}}{R},
\\
&B_{10}^{(2)}:= \frac{V^3(R)}{\om_2 \big( B_{3,1}^{(1)}
\big)^3 A^2} \int\limits_{0}^{R} \di s \frac{V(s)-V(R)}{V'(s)} \int\limits_{0}^{s} \frac{V(z)-V(s)}{V'(z)}V(z)\di z,
\\
& B_{11}^{(2)}:=-B_{2,1}^{(1)}\ln R + \int\limits_{0}^{R} \left(\frac{V^2(t)}{V'(t)}-B_{2,1}^{(1)}(t-R)^{-1}\right)\di t,
\\
&B_{12}^{(2)}:=-\frac{V(R)}{\om_1 A} \int\limits_{0}^{R} \frac{V(s)-V(R)}{V'(s)} V(s)\di s,
\\
&B_{13}^{(2)}:=-B_{12}^{(2)} \ln R + \int\limits_{0}^{R} \left(\frac{V(t)}{V'(t)} \int\limits_{0}^{t} \frac{V(s)-V(t)}{V'(s)}\di s- B_{12}^{(2)}(t-R)^{-1}\right)\di t,
\\
&B_{14}^{(2)}:=\frac{V^4(R)}{\om_2 A^2\big(B_{2,1}^{(1)}\big)^4}\int\limits_{0}^{R}\di s \frac{V(s)-V(R)}{V'(s)}\int\limits_{0}^{s} \frac{V(z)-V(s)}{V'(z)}V(z)\di z.
\end{align*}

\end{thm}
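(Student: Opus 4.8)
\emph{Proof plan.} Set $\Lambda(r):=\lambda^{-1}(r)$. First I would integrate (\ref{eqn:basic}) once and use $\psi'(0)=\psi(r)=0$ to recast the eigenvalue problem as the fixed point relation $\psi(t)=\lambda\int_t^r f^{1-n}(\tau)\int_0^\tau f^{n-1}(s)\psi(s)\di s\,\di\tau$; equivalently, $\Lambda(r)$ is the principal eigenvalue of the positivity-improving compact operator on $L_2\big((0,r),f^{n-1}\di t\big)$ with kernel $\int_{\max(t,s)}^r f^{1-n}(\tau)\di\tau$. Since $f(R)=0$ and $f'(R)=A<0$ give $f^{n-1}(\tau)\sim|A|^{n-1}(R-\tau)^{n-1}$, the integral $\int^r f^{1-n}\di\tau$ diverges as $r\to R^-$, like $(R-r)^{2-n}$ for $n\geqslant 3$ and like $|\ln(R-r)|$ for $n=2$; hence $\Lambda(r)\to+\infty$, i.e.\ $\lambda(r)\to 0^+$, and the expansion proceeds in powers of $(R-r)$ (of $\ln^{-1}(R-r)$ when $n=2$). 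Because $\int^R f^{1-n}=+\infty$, the point $t=R$ has zero capacity, so dropping the Dirichlet condition there is harmless in the limit and $\HH_r$ converges in the norm--resolvent sense to the radial Laplacian on the closed manifold; consequently the normalised first eigenfunction $\psi$ tends to the constant, while the second eigenvalue $\Lambda_2(r)$ stays bounded (it tends to $\nu^{-1}$, with $\nu>0$ the first positive radial eigenvalue of $M$). Thus the principal eigenpair is simple with spectral gap tending to infinity, and the approximations produced by repeatedly substituting the current guess into the fixed point relation converge, the error decreasing geometrically in $\Lambda_2/\Lambda$.

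Next I would pass to $u(t):=\int_0^t f^{n-1}(s)\psi(s)\di s$ and interchange the order of integration to obtain the self-consistent identity $\Lambda(r)=u(r)^{-1}\int_0^r u(\rho)V(\rho)/V'(\rho)\,\di\rho$ together with a Volterra equation for $u$, and iterate starting from $u\equiv V$. At each stage this gives an approximation of $\Lambda(r)$ that is a finite sum of nested integrals of $V$, $V'$, $f^{1-n}$ --- the leading term being $\omega_{n-1}V(r)^{-1}\int_0^r V^2(\tau)/V'(\tau)\,\di\tau$, and the higher corrections being exactly the structures $\int_0^R \frac{V(t)}{V'(t)}\big(\int_0^t \frac{V(s)-V(t)}{V'(s)}V(s)\di s\big)\di t$ and their analogues that make up the constants $B^{(2)}_\bullet$. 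The number of iterations, hence the qualitative form of the answer, depends on $n$ through the decay rate of $\Lambda_2/\Lambda$, which is why the cases $n=2,\dots,6$ and $n\geqslant 7$ come out so differently. (This is essentially the method of matched asymptotic expansions for a domain with a shrinking hole, packaged as a one-dimensional iteration.)

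The substantive step is to evaluate those nested integrals asymptotically as $r\to R^-$. Taylor-expanding $f$ near $t=R$ --- where, in parallel with $t=0$, all \emph{even} derivatives vanish by smoothness of the manifold, with $A=f'(R)$ --- yields Laurent-type expansions of $f^{1-n}$, $V$, $V'$ in powers of $(R-\tau)$, the exponents of $f^{1-n}$ stepping down by two from $-(n-1)$. For each integrand I would peel off the singular Laurent part, which integrates termwise to explicit powers of $(R-r)$, leaving a remainder whose primitive is continuous at $R$ and converges there to the renormalised finite constants $B^{(2)}_\bullet$ --- e.g.\ $B_0^{(2)}=\int_0^R\big(\frac{V^2}{V'}-B_{5,1}^{(1)}(t-R)^{-4}-B_{5,2}^{(1)}(t-R)^{-2}\big)\di t-\frac{B_{5,1}^{(1)}}{3R^3}-\frac{B_{5,2}^{(1)}}{R}$ is precisely the finite part of $\int_0^r V^2/V'$ for $n=5$. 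A primitive of $(R-\tau)^{-1}$ is $-\ln(R-r)$; since the Laurent exponents of $f^{1-n}$ hit $-1$ exactly in even dimensions (for $n$ odd they pass through $0$), a logarithm appears already at the first integration when $n$ is even and never when $n$ is odd, and iterating such terms generates the $\ln^2(R-r)$ remainders in the even-dimensional cases. Inverting $\lambda=\Lambda^{-1}$ and collecting terms then produces (\ref{5.45})--(\ref{5.65}), with the $B_{n,k}^{(1)}$ appearing as the leading Laurent coefficients of the successive integrands at $\tau=R$ and the $B^{(2)}_\bullet$ as the surviving finite parts.

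The hard part will be this last bookkeeping: organising the Laurent expansions of the multiply nested integrals, tracking the $n$-dependent number of singular terms and the powers of $\ln(R-r)$ generated at each integration, and checking that the accumulated finite parts recombine into exactly the constants listed. A secondary but necessary ingredient is to make the iteration rigorous --- to establish the uniform lower bound on the spectral gap and the quantitative rate $\psi\to 1$ --- so that the expansion can be truncated with a controlled remainder.
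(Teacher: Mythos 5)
Your plan is, in substance, the same as the paper's proof. Both reduce to building a sequence of increasingly accurate approximations of $\lambda^{-1}(r)$ from the Green-kernel integral relation, both arrive at exactly the nested integrals $\int_0^r \frac{V}{V'}\int_0^t\frac{V(s)-V(t)}{V'(s)}V(s)\,\di s\,\di t$ (and their higher analogues), both obtain the explicit coefficients by Laurent-expanding $V^2/V'$ near $t=R$ and peeling off the singular powers $(t-R)^{-n+2k+1}$ (which forces a logarithm exactly when an exponent reaches $-1$, i.e.\ in even dimensions, with the leftover finite part giving the renormalised constants $B_\bullet^{(2)}$), and both justify the truncation via the spectral gap (the paper's Lemma~\ref{lm7.2}). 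The main difference is purely in packaging: you iterate the Volterra fixed-point map starting from $\psi\equiv 1$, whereas the paper writes a formal series in the artificial small parameter $\mu_n(r)$ and front-loads the entire first iterate into a non-constant leading ansatz $\psi_0$ given by (\ref{6.30}), which already satisfies $\psi_0(r,r)=0$ exactly; that choice is what lets them state explicitly (and is their stated motivation) that they bypass the matched-asymptotics / boundary-layer machinery, whereas you describe your iteration as matched asymptotics "packaged as a one-dimensional iteration" — same computation, opposite advertising. Two small slips in your sketch: the leading term of $\Lambda(r)$ should be $G(r)/V(r)=V(r)^{-1}\int_0^r V^2/V'\,\di\tau$ with no spare factor of $\omega_{n-1}$ (since $V'=\omega_{n-1}f^{n-1}$ already absorbs it, cf.\ (\ref{6.40})); and the appeal to norm-resolvent convergence would need a quantitative form to give the geometric decay you want — the paper instead proves the needed gap bound and uniqueness of the vanishing eigenvalue directly in Lemma~\ref{lm7.2}, which is the cleaner route.
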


\begin{rem}
The first term in the asymptotics in the case of domains with a small hole was investigated by many authors -- see~\cite{fluc} and the references therein, and also~\cite{chav} and~\cite{manapl}, for instance; the latter of these includes the full expansion in the case of a two-dimensional manifold
with a small hole. With our approach we are able to obtain the complete asymptotic expansions in any dimension, cf.
identities (\ref{6.12}) below. As we see, for dimensions $2, 4$ and $6$ logarithmic terms appear in these expansions. While for $n=2$ this is quite natural as these terms are produced directly by the singularity of the Green function for such two-dimensional elliptic operators, for other
dimensions their appearance is not so evident and, to our knowledge, was not known before. Moreover, provided $f$ is smooth enough, say, $f$ is infinitely
differentiable, it can be shown that the complete asymptotic expansion for $\l(r)$ involves logarithmic terms in all even dimensions, see Remark~\ref{rm5.1}.
\end{rem}

First we construct the asymptotics formally and then we rigorously estimate the error terms. Since for $r=R$ the lowest eigenvalue of Laplace-Beltrami operator on the manifold is $0$ and the associated eigenfunction
is constant, we could assume that the leading term in the asymptotic expansion for $\psi(t,r)$ should be constant. On the other hand, the constant
function does not satisfy the boundary condition on $t=r$ in (\ref{eqn:basic}). The usual way to achieve the desired boundary condition is to employ
the boundary layer method \cite{VL} or the matching of asymptotic expansion \cite{Il}. Here we do not go in this way since it is possible to include
the inner expansion into the external one and to construct the full asymptotics as a series in terms of the variable $t$ without introducing the
rescaled variable. In order to do it, we have to take the leading term in the expansion for $\psi$ in a special form. Namely, we assume the
following ans\"atzes,
\begin{gather}\label{6.12}
\l(r)=\sum\limits_{j=1}^{\infty}\mu_n^j(r) \l_j(r),\quad \psi(t,r)=\sum\limits_{j=0}^{\infty} \mu_n^j(r)\psi_j(t,r),
\\
\mu_n(r):=\left\{
\begin{aligned}
&\ln^{-1}(R-r), && n=2,
\\
& (2-n)(r-R)^{n-2}, && n\geqslant 3.
\end{aligned}\right.  \nonumber
\end{gather}
where $\l_j$ and $\psi_j$ are to be determined. We define the function $\psi_0$ as the solution to the boundary value problem
\begin{equation*}
\left\{
\begin{aligned}
-&(f^{n-1}\psi_0')'=\mu \l_0 f^{n-1}\quad\text{in}\quad (0,r),
\\
&\psi_0'(0,r)=0,\quad \psi_0(0,r)=1,
\end{aligned}
\right.
\end{equation*}
 given by the formula
\begin{equation}\label{6.30}
\psi_0(t,r)=\l_0(r)
 \mu_n(r) \int\limits_{0}^{t} \frac{V(s)}{V'(s)}\di s,\quad \l_0
 (r):=\left(\mu_n(r)\int\limits_{0}^{r} \frac{V(s)}{V'(s)}\di s\right)^{-1}.
\end{equation}
It follows from (\ref{5.1}) that
\begin{align}\label{6.9}
&f(t)=A(t-R)+\odr(t-R),\qquad\qquad\qquad t\to R^{-}.
\\
&
\begin{aligned}
&\mu_n(r)\int\limits_{0}^{r} \frac{V(s)}{V'(s)}\di s=\frac{V(R)}{\om_{n-1}A^{n-1}}+\odr(1),\quad
r\to R^{-},
\\
&\l_0(r)=\frac{\om_{n-1}A^{n-1}}{V(R)}(1+\odr(1)), \qquad\quad\qquad
r\to R^{-},
\end{aligned}
\label{6.10}
\end{align}
It is also obvious that $\psi_0\in C^2[0,R]$ and
\begin{equation*}
\|\psi_0\|_{C[0,R]}\leqslant C,
\end{equation*}
where $C$ is a constant independent of $r$.

We plug in series (\ref{6.12}) into the eigenvalue problem (\ref{eqn:basic}) and equate the coefficients of like powers of $\mu$,
leading us to the boundary value problems for $\psi_j$,
\begin{equation}\label{6.13}
\begin{aligned}
-&(f^{n-1}\psi_1')'=\l_1 f^{n-1}\psi_0-\l_0 f^{n-1} &&\text{in}\quad (0,r), &&
\\
-&(f^{n-1}\psi_j')'=\l_j f^{n-1}\psi_0+ f^{n-1}\sum\limits_{k=1}^{j-1} \l_k \psi_{j-k} && \text{in}\quad (0,r), && j\geqslant 2,
\\
&\psi_j'(0,r)=\psi_j(r,r)=0, && &&  j\geqslant 1.
\end{aligned}
\end{equation}

\begin{lemma}\label{lm7.3}
Let $g\in C[0,R]$ and
\begin{equation}\label{6.31}
\int\limits_{0}^{r}f^{n-1}(t)g(t)\di t=0.
\end{equation}
Then the boundary value problem
\begin{equation}\label{6.1}
-(f^{n-1}u')'=f^{n-1}g\quad \text{in}\quad (0,r)
\qquad u'(0)=u(r)=0,
\end{equation}
has the unique solution given by the formula
\begin{equation}\label{6.7}
u(t)=\LL[g](t),\quad \LL[g](t):=\int\limits_{t}^{r} f^{-n+1}(s)\left(\int\limits_{0}^{s} f^{n-1}(z)g(z)\di z\right)\di s.
\end{equation}
It belongs to $C^2[0,r]$ and satisfies the uniform in $r$ estimate
\begin{equation}\label{6.32}
\|u\|_{C^1[0,R]}\leqslant C\|h\|_{C[0,R]}.
\end{equation}
\end{lemma}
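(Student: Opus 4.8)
The plan is to verify directly that the explicit formula~(\ref{6.7}) solves~(\ref{6.1}), to show that any solution must have this form, and then to establish the uniform estimate~(\ref{6.32}). First I would check well-posedness of the formula: the inner integral $F(s):=\int_0^s f^{n-1}(z)g(z)\,\dm z$ is $C^1$ on $[0,R]$ since $g\in C[0,R]$, and because $f(s)=A(s-R)+\odr(s-R)$ with $A<0$ by~(\ref{5.1}), the weight $f^{-n+1}(s)$ blows up like $|s-R|^{-(n-1)}$ near $s=r$. However, condition~(\ref{6.31}) gives $F(r)=0$, so $F(s)=O(|s-r|)$ near $s=r$ (indeed $F(s)=-\int_s^r f^{n-1}g$, which is $O(|s-r|\cdot\|f^{n-1}g\|_{C})$ on a fixed neighbourhood of $r$ where $f$ is bounded); this kills one power and the integrand $f^{-n+1}(s)F(s)$ is bounded near $s=r$, hence integrable, so $\LL[g](t)$ is well-defined for $t\in[0,r]$. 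Near $t=0$ there is no singularity since $f(t)\sim t$ and $n-1\geqslant 1$, with the integral $\int_0^s f^{n-1}(z)g(z)\,\dm z$ vanishing at least like $s^n$.

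Next I would differentiate. By construction $u(r)=0$, and differentiating~(\ref{6.7}) under the integral sign gives $u'(t)=-f^{-n+1}(t)\int_0^t f^{n-1}(z)g(z)\,\dm z$, so $f^{n-1}(t)u'(t)=-\int_0^t f^{n-1}(z)g(z)\,\dm z$, which is $C^1$ and tends to $0$ as $t\to0^+$; since $f^{n-1}(t)\to0$ as well one checks $u'(0)=0$ (the ratio is $\odr(1)$ because the numerator is $O(t^n)$ while $f^{n-1}(t)\sim t^{n-1}$). Differentiating once more yields $-(f^{n-1}u')'=f^{n-1}g$ on $(0,r)$, which is~(\ref{6.1}), and the regularity $u\in C^2[0,r]$ follows from $f^{n-1}u'\in C^1$ together with $f^{n-1}$ being smooth and nonvanishing on $(0,r)$, the behaviour at the endpoints having just been checked. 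For uniqueness: if $u_1,u_2$ both solve~(\ref{6.1}) then $w=u_1-u_2$ satisfies $(f^{n-1}w')'=0$, so $f^{n-1}w'\equiv\text{const}$; the condition $w'(0)=0$ combined with $f^{n-1}(0)=0$ forces this constant to be $0$ (alternatively, $f^{n-1}w'\to0$ as $t\to0$), hence $w'\equiv0$, and then $w(r)=0$ gives $w\equiv0$.

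Finally, for the estimate~(\ref{6.32}) I would bound $u$ and $u'$ separately, uniformly in $r$ as $r\to R^-$. From $u'(t)=-f^{-n+1}(t)F(t)$ with $F(t)=-\int_t^r f^{n-1}(z)g(z)\,\dm z$, one estimates $|F(t)|\leqslant\|g\|_{C[0,R]}\int_t^r f^{n-1}(z)\,\dm z=\omega_{n-1}^{-1}\|g\|_{C[0,R]}(V(r)-V(t))$; near $t=R$ this is $O((R-t)\cdot(R-t)^{n-1})$ while $f^{-n+1}(t)=O((R-t)^{-(n-1)})$, so the quotient is $O(R-t)$ and in particular bounded by $C\|g\|_{C[0,R]}$ with $C$ independent of $r$ (here one uses that $f$ is fixed on $[0,R]$ and does not depend on $r$). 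On a compact subinterval of $(0,R)$ the bound is elementary, and near $t=0$ one uses $f(t)\sim t$ and $F(t)=O(t^n)$ to get $|u'(t)|\leqslant C\|g\|_{C[0,R]}$. Integrating, $|u(t)|=|\int_t^r u'|\leqslant R\cdot\sup|u'|\leqslant C\|g\|_{C[0,R]}$. Combining the two gives~(\ref{6.32}); I would write $\|h\|$ there as $\|g\|$, the $h$ being a typo. The main obstacle, and the only genuinely delicate point, is the endpoint analysis at $t=r\to R^-$: one must use~(\ref{6.31}) to extract the extra vanishing of $F$ and check that the resulting bounds are uniform in $r$, which works precisely because $f$ is independent of $r$ and the integrals $\int_t^r f^{n-1}$ are controlled by the fixed, smooth volume function $V$.
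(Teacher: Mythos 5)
Your proposal is correct and takes essentially the same route as the paper's proof. The paper likewise uses (\ref{6.31}) to rewrite $u'(t)$ as both $-f^{-n+1}(t)\int_0^t f^{n-1}g\,\di z$ and $f^{-n+1}(t)\int_t^r f^{n-1}g\,\di z$, giving $|u'(t)|\leqslant \|g\|_{C[0,R]}\,\min\{V(t),\,V(R)-V(t)\}/V'(t)$, whose right-hand side is an $r$-independent continuous function of $t$ on $[0,R]$; your region-by-region argument (near $0$, interior, near $R$) is just an unpacked version of this single min bound, and the rest (verification that (\ref{6.7}) solves (\ref{6.1}), $C^2$ regularity, uniqueness via $(f^{n-1}w')'=0$, and $\|u\|_\infty\leqslant R\sup|u'|$) matches. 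You are also right that the $\|h\|$ in (\ref{6.32}) is a typo for $\|g\|$.
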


\begin{proof}
It is clear that the function $u$ defined by (\ref{6.7}) solves (\ref{6.1}) and belongs to $C^2[0,R]$. Let us prove  estimate (\ref{6.32}). Due to
(\ref{6.31}) we have
\begin{equation*}
u'(t)=-f^{-n+1}(t)\int\limits_{0}^{t}f^{n-1}(s)g(s)\di s=
f^{-n+1}(t)\int\limits_{t}^{r} f^{n-1}(s) g(s)\di s.
\end{equation*}
By the boundedness of $g$ and the positiveness of $f$ these formulae imply
\begin{align*}
&|u'(t)|\leqslant \frac{V(t)}{V'(t)}\|g\|_{C[0,R]},
\\
&|u'(t)|\leqslant \frac{V(r)-V(t)}{V'(t)}\|g\|_{C[0,R]}\leqslant \frac{V(R)-
V(t)}{V'(t)}\|g\|_{C[0,R]}.
\end{align*}
Hence,
\begin{equation*}
|u'(t)|\leqslant \frac{\min\{V(t),
V(R)-V(t)\}}{V'(t)}\|g\|_{C[0,R]}\leqslant C\|g\|_{C[0,R]},
\end{equation*}
where the constant $C$ is independent of $r$ and $g$. Employing the last estimate and the identity
\begin{equation*}
u(t)=-\int\limits_{t}^{r} u'(s)\di s,
\end{equation*}
we arrive at (\ref{6.32}).
\end{proof}

We employ the last lemma to solve the problems (\ref{6.13}). In order for the series (\ref{6.12}) to be asymptotic the coefficients $\l_j$ and
$\psi_j$ should be bounded uniformly in $r$. Hence, to have the function $\psi_1$ bounded, the right hand side of the equation for $\psi_1$ should
satisfy (\ref{6.31}). It implies the formula for $\l_1$,
\begin{equation*}
\l_1
(r)=\frac{\l_0
 (r)V(r)}{\int\limits_{0}^{r} V'(s)\psi_0(s,r)\di s}.
\end{equation*}
We shall now compute the denominator in the last formula. To this end, integrate by parts taking into consideration the definition of $\psi_0$,
\begin{equation}\label{6.45}
\int\limits_{0}^{r} V'(t)\psi_0(t,r)\di t=\l_0\mu G(r),\quad G(t):=\int\limits_{0}^{t} \frac{V^2(s)}{V'(s)}\di s.
\end{equation}
Together with the definition of $\l_0$ in (\ref{6.30}), this allows us to rewrite the formula for $\l_1$,
\begin{equation}\label{6.40}
\l_1(r)=\frac{V(r)}{\mu_n(r) G(r)}.
\end{equation}
Again by (\ref{6.9}) we see that
\begin{equation*}
\l_1(r)=\frac{\om_{n-1}A^{n-1}}{V(R)}+\odr(\mu).
\end{equation*}

Since condition (\ref{6.31}) is satisfied,  the solution to the equation for $\psi_1$ in (\ref{6.13}) is given by the identity
\begin{equation*}
\psi_1=\LL[\l_1 f^{n-1}\psi_0-\l_0 f^{n-1}]\in C^2[0,R]
\end{equation*}
and this function is bounded uniformly in $r$, \begin{equation*}
\|\psi_1\|_{C^1[0,R]}\leqslant C.
\end{equation*}

In the same way we solve problems (\ref{6.13}) for $j\geqslant 2$. We first write condition (\ref{6.31}) that determines $\l_j$,
\begin{equation}\label{6.44}
\begin{aligned}
\l_j=&-\frac{1}{\int\limits_{0}^{r} f^{n-1}\psi_0\di t} \sum\limits_{k=1}^{j-1}\l_k \int\limits_{0}^{r} f^{n-1}\psi_{j-k}\di t
\\
=&-\frac{1}{\l_0\mu G
} \sum\limits_{k=1}^{j-1}\l_k \int\limits_{0}^{r} V'\psi_{j-k}\di t,
\end{aligned}
\end{equation}
where we have used (\ref{6.45}). Provided the functions $\psi_k$ and $\l_k$, $k\geqslant j-1$, are bounded uniformly in $r$ (the former in
the $C^1[0,R]$-norm), by (\ref{6.9}) we obtain that $\l_j$ is also bounded uniformly in $r$. Then, by Lemma~\ref{lm7.3}, the function $\psi_j$
reads as follows,
\begin{equation}\label{6.46}
\psi_j=\LL\big[\l_j \psi_0+ \sum\limits_{k=1}^{j-1} \l_k \psi_{j-k}\big].
\end{equation}
It belongs to $C^2[0,R]$ and is bounded uniformly in $r$ in the $C^1[0,R]$-norm.

In conclusion to the formal constructing we prove that  series (\ref{6.12}) are formal asymptotic solutions to (\ref{eqn:basic}). For $N\geqslant
0$ we let
\begin{equation}\label{6.21}
\l^{(N)}(r):=\sum\limits_{j=1}^{N} \mu_n^j(r)\l_j(r),\quad \psi^{(N)}(t,r):=\sum\limits_{j=0}^{N} \mu_n^j(r)\psi_j(t,r).
\end{equation}

\begin{lemma}\label{lm7.1}
Given any $N\geqslant 0$, for the functions $\l^{(N)}$ and $\psi^{(N)}$  the convergences
\begin{equation}\label{6.22}
\l^{(N)}\to0,\quad \|\psi^{(N)}-\psi_0\|_{C^2[0,r]}\to0,\quad \e\to0^+,
\end{equation}
and the equation
\begin{equation}\label{6.23}
(\HH_r-\l^{(N)})\psi^{(N)}=h^{(N)}
\end{equation}
hold true. The function $h_N\in C[0,r]$ satisfies the estimate
\begin{equation}\label{6.24}
\|h_N\|_{C[0,r]}\leqslant C_N \mu_n^{N+1},
\end{equation}
where $C_N$ is a constant independent of $\mu_n$ and $\e$.
\end{lemma}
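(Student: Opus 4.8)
The plan is the routine verification that the formal series constructed above are genuine asymptotic solutions; all the ingredients are already in place. Throughout I write $\mu:=\mu_n(r)$, $\e:=R-r$, and $\mathcal{A}:=-f^{1-n}\frac{d}{dt}\big(f^{n-1}\frac{d}{dt}\,\cdot\,\big)$ for the differential expression associated with $\HH_r$. First I would record two elementary facts: $\mu\to0$ as $\e\to0^+$ (for $n=2$ because $\ln(R-r)\to-\infty$, for $n\geqslant3$ because $(r-R)^{n-2}\to0$), and $|\mu|\leqslant1$ once $\e$ is small. Since the construction already provides uniform-in-$r$ bounds on the $\lambda_j$ and the bounds $\|\psi_j\|_{C^1[0,r]}\leqslant C$ for $j\geqslant1$ together with $\|\psi_0\|_{C[0,r]}\leqslant C$, the convergences $\lambda^{(N)}=\sum_{j=1}^N\mu^j\lambda_j\to0$ and $\psi^{(N)}-\psi_0=\sum_{j=1}^N\mu^j\psi_j\to0$ in $C^1[0,r]$ are immediate; the only genuine work in (\ref{6.22}) is to upgrade the latter to $C^2[0,r]$, uniformly in $r$.

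For that upgrade I would rewrite the $j$-th equation of (\ref{6.13}) ($j\geqslant1$) as $\psi_j''=-g_j-(n-1)\tfrac{f'}{f}\psi_j'$, where $g_j$ is the right-hand side of (\ref{6.13}) divided by $f^{n-1}$ (so $g_1=\lambda_1\psi_0-\lambda_0$ and $g_j=\lambda_j\psi_0+\sum_{k=1}^{j-1}\lambda_k\psi_{j-k}$ for $j\geqslant2$), which is continuous and uniformly bounded on $[0,r]$ by the inductive structure of the construction. Since the $\lambda_j$ were chosen so that the solvability condition (\ref{6.31}) holds, the proof of Lemma~\ref{lm7.3} supplies the two representations $\psi_j'(t)=-f^{1-n}(t)\int_0^t f^{n-1}g_j$ and $\psi_j'(t)=f^{1-n}(t)\int_t^r f^{n-1}g_j$, hence $|\psi_j'(t)|\leqslant\tfrac{\min\{V(t),\,V(R)-V(t)\}}{V'(t)}\|g_j\|_{C[0,r]}$. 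The decisive estimate is then $\big|\tfrac{f'(t)}{f(t)}\big|\tfrac{\min\{V(t),\,V(R)-V(t)\}}{V'(t)}\leqslant C$ on $[0,R)$: near $t=0$ one has $f'/f\sim1/t$ against $V/V'\sim t/n$, near $t=R$ one has $f'/f\sim1/(R-t)$ against $(V(R)-V(t))/V'(t)\sim(R-t)/n$, and the quantity is continuous in between. This gives $\|\tfrac{f'}{f}\psi_j'\|_{C[0,r]}\leqslant C\|g_j\|_{C[0,r]}$, whence $\|\psi_j\|_{C^2[0,r]}\leqslant C$ uniformly in $r$, and (\ref{6.22}) follows.

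To obtain (\ref{6.23}) and (\ref{6.24}) I would substitute $\psi^{(N)}=\sum_{j=0}^N\mu^j\psi_j$ into $\mathcal{A}-\lambda^{(N)}$ and collect powers of $\mu$, treating $\lambda_j,\psi_j$ as the ($r$-dependent but uniformly bounded) coefficients. The defining relations $\mathcal{A}\psi_0=\mu\lambda_0$, $\mathcal{A}\psi_1=\lambda_1\psi_0-\lambda_0$ and $\mathcal{A}\psi_j=\lambda_j\psi_0+\sum_{k=1}^{j-1}\lambda_k\psi_{j-k}$ — which are precisely (\ref{6.13}) — say exactly that the coefficients of $\mu^0,\mu^1,\dots,\mu^N$ in $(\mathcal{A}-\lambda^{(N)})\psi^{(N)}$ all cancel, so the only surviving part is that of the Cauchy product $\big(\sum_{i=1}^N\mu^i\lambda_i\big)\big(\sum_{j=0}^N\mu^j\psi_j\big)$ of $\mu$-degree exceeding $N$:
\[
h^{(N)}=(\HH_r-\lambda^{(N)})\psi^{(N)}=-\sum_{m=N+1}^{2N}\mu^{m}\!\!\!\sum_{\substack{i+j=m\\ 1\leqslant i\leqslant N,\ 0\leqslant j\leqslant N}}\!\!\!\lambda_i\,\psi_j .
\]
Each $\psi_j$ is continuous on $[0,r]$ and the $\lambda_i$ are constants, so $h^{(N)}\in C[0,r]$; since every term carries a factor $\mu^m$ with $m\geqslant N+1$, $|\mu|\leqslant1$, and the $\lambda_i,\psi_j$ are bounded uniformly in $r$, this yields $\|h^{(N)}\|_{C[0,r]}\leqslant C_N|\mu|^{N+1}$ with $C_N$ independent of $r$, i.e. (\ref{6.24}). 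Finally $\psi^{(N)}\in\Dom(\HH_r)$, so (\ref{6.23}) is an identity at the level of the operator and not merely of $\mathcal{A}$: indeed $\psi^{(N)}\in C^2[0,r]$, $(\psi^{(N)})'(0)=0$, and $\psi^{(N)}(r)=\psi_0(r)+\sum_{j\geqslant1}\mu^j\psi_j(r)=0$, because $\psi_j(r,r)=0$ for $j\geqslant1$ and $\lambda_0$ is chosen precisely so that $\psi_0(r,r)=0$.

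The main obstacle is the uniform-in-$r$ $C^2[0,r]$ control of the $\psi_j$: because the weight $f^{n-1}$ degenerates both at $t=0$ and, in the limit, at $t=r\to R^-$, one has to check that the lower-order term $\tfrac{f'}{f}\psi_j'$ in the equation for $\psi_j''$ stays bounded at both ends as the interval collapses. This is exactly where the two-sided representation of $\psi_j'$ furnished by the solvability condition (\ref{6.31}) — equivalently the $\min\{V(t),V(R)-V(t)\}/V'(t)$ bound of Lemma~\ref{lm7.3} — is essential; everything else is bookkeeping of powers of $\mu$ in the Cauchy product.
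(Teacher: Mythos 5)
Your proof takes the same route as the paper's (substitute the partial sums into the equation, exploit the defining relations \eqref{6.13} to see the cancellations, and bound the residual by the uniform boundedness of the $\lambda_j$ and $\psi_j$), and your explicit identity for $h^{(N)}$ is correct; the fact that your formula carries an overall minus sign while the paper's displayed formula does not is immaterial for \eqref{6.24}, and your sign is actually the right one if one tracks the computation carefully.

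Where you go beyond the paper is in noticing that \eqref{6.22} asserts convergence in $C^2[0,r]$, whereas Lemma~\ref{lm7.3} and the surrounding discussion only record \emph{uniform-in-$r$ $C^1$-bounds} for the $\psi_j$ (``bounded uniformly in $r$ in the $C^1[0,R]$-norm''). The paper's one-line claim that \eqref{6.22} ``follows directly from the uniform boundedness of $\lambda_j$ and $\|\psi_j\|_{C^1[0,R]}$'' does not by itself yield $C^2$-convergence. Your step — reading $\psi_j''$ off from the equation, using the two representations of $\psi_j'$ coming from the solvability condition \eqref{6.31}, and checking that $|f'/f|\cdot\min\{V,V(R)-V\}/V'$ is bounded on $[0,R)$ (with the cancellation $1/t$ against $t/n$ at the origin and $1/(R-t)$ against $(R-t)/n$ at $R$) — is exactly the right way to close this gap, and it produces the uniform-in-$r$ $C^2$-bound that the statement actually requires. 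Your check that $\psi^{(N)}\in\Dom(\HH_r)$ is also correct, keeping in mind that the formula \eqref{6.30} for $\psi_0$ should read $\int_t^r V/V'$ rather than $\int_0^t V/V'$ (a typo in the paper, confirmed by the later computation of $\|\psi_0\|_{X_r^0}$), so that indeed $\psi_0(r,r)=0$ and $\psi_0'(0,r)=0$ as you use.
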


\begin{proof}
The convergences (\ref{6.22}) follow directly from the uniform boundedness of $\l_j$ and $\|\psi_j\|_{C^1[0,R]}$ in $r$.

Employing  boundary value problems (\ref{6.13}) for $\psi_j$, by direct calculations we check that
\begin{equation*}
h^{(0)}=\l_0\mu_n\psi_0,\quad h^{(N)}=\sum\limits_{\genfrac{}{}{0 pt}{}{1\leqslant k,j\leqslant N}{k+j\geqslant N+1}} \mu_n^{k+j}\l_k \psi_j,\quad
N\geqslant 1.
\end{equation*}
satisfy the boundary value problem (\ref{6.23}).
Estimate (\ref{6.24}) follows directly from the last identity and the aforementioned boundedness of $\l_j$ and $\psi_j$.
\end{proof}

We proceed to the justification of the asymptotics. We first prove two auxiliary lemmas characterizing $\l(r)$.

\begin{lemma}\label{lm7.2}
The eigenvalue $\l(r)$ is the only one of the problem (\ref{eqn:basic}) which converges to zero as $\e\to0^+$. It is simple and satisfies the
estimate
\begin{align}\label{6.47}
&\l(r)\leqslant  \frac{\mu_n(r)\l_1(r)}{1+\mu_n\l_1(\mu_n) G^{-1}(r)
\int\limits_{0}^{r}\frac{G^2(t)}{G'(t)}\di t},
\\
&0\leqslant G^{-1}(r)
\int\limits_{0}^{r}\frac{G^2(t)}{G'(t)}\di t \leqslant \left\{
\begin{aligned}
&C\mu_n(r), && n=2,3,
\\
&C\mu_n(r)\ln\mu_n(r), && n=4,
\\
&C\mu_n^{\frac{2}{n-2}}(r), && n\geqslant 5,
\end{aligned}
\right.
\end{align}
where $C$ is a constant independent of $\mu_n$.
\end{lemma}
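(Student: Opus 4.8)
The plan is to combine a variational upper bound with a qualitative spectral-gap argument. First I would establish that $\l(r)$ is the unique eigenvalue tending to zero: since the Dirichlet Laplace--Beltrami operator $\HH_r$ converges, in a suitable norm-resolvent sense as $r\to R^-$, to the operator on the whole compact manifold (which has a simple zero eigenvalue, the constant function, and a spectral gap to $\l_2>0$), standard perturbation theory guarantees that exactly one eigenvalue of $\HH_r$ lies near $0$ for $r$ close to $R$, that it is simple, and that all others stay bounded away from zero. Concretely, one can quote the fact established in the formal construction (Lemma~\ref{lm7.1}) that $\psi^{(0)}=\psi_0$ satisfies $(\HH_r-\l^{(0)})\psi_0=h^{(0)}$ with $\|h^{(0)}\|=O(\mu_n)$ and $\l^{(0)}=\mu_n\l_1=O(\mu_n)$; together with $\|\psi_0\|_{Y^0_r}$ bounded below, this forces $\dist(\spec(\HH_r),\mu_n\l_1)=O(\mu_n)$ by the almost-eigenvalue principle, pinning down the small eigenvalue and its simplicity once the next eigenvalue is shown to be bounded below (which follows from min-max and domain monotonicity, comparing with a fixed slightly smaller ball).

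Next I would derive the quantitative upper bound (\ref{6.47}) by plugging the trial function $\psi^{(1)}=\psi_0+\mu_n\psi_1$ into the Rayleigh quotient (\ref{eqn:variational}). By construction $\psi^{(1)}\in X_r$ (it satisfies the Dirichlet condition at $t=r$ since $\psi_1(r,r)=0$), so
\[
\l(r)\leqslant \frac{h_r[\psi^{(1)}]}{\|\psi^{(1)}\|_{X^0_r}^2}.
\]
The numerator and denominator are computed by integrating by parts using the defining equations (\ref{6.13}) for $\psi_0,\psi_1$ and the identity (\ref{6.45}); the cross terms reorganise, after using the formula (\ref{6.40}) for $\l_1$ and the definition (\ref{6.30}) of $\l_0$, into the stated quotient
$\mu_n\l_1\big/\big(1+\mu_n\l_1(\mu_n)G^{-1}(r)\int_0^r G^2(t)/G'(t)\,dt\big)$. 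This is the ``main obstacle'': keeping track of the several integrations by parts and showing that every boundary term at $t=r$ vanishes (it does, because $\psi_1(r,r)=0$ and $f^{n-1}(r)\to0$) while the remaining volume integrals collapse exactly to $\int_0^r G^2/G'$; the bookkeeping is routine but must be done carefully to get the precise constant in (\ref{6.47}) rather than merely an asymptotic match.

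Finally I would prove the auxiliary estimate on $G^{-1}(r)\int_0^r G^2(t)/G'(t)\,dt$. Since $G(t)=\int_0^t V^2(s)/V'(s)\,ds$ and, by (\ref{6.9}), $V(t)-V(R)=O((R-t)^n)$ while $V'(t)=\omega_{n-1}f^{n-1}(t)\sim \omega_{n-1}|A|^{n-1}(R-t)^{n-1}$ as $t\to R^-$, one reads off the blow-up rate of the integrand near $t=R$: $V^2/V'\sim c (R-t)^{-(n-1)}$, hence $G(t)$ behaves like $(R-t)^{-(n-2)}$ for $n\geqslant 3$ and like $\ln\frac1{R-t}$ for $n=2$, i.e.\ $G(r)\sim c\,\mu_n^{-1}(r)$. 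Then $G^2/G'\sim G^2/(V^2/V') $ has an integrable-or-not singularity depending on $n$, and integrating from $0$ to $r$ and dividing by $G(r)$ produces precisely the three regimes $C\mu_n$ ($n=2,3$), $C\mu_n\ln\mu_n$ ($n=4$), $C\mu_n^{2/(n-2)}$ ($n\geqslant5$); the nonnegativity is immediate from positivity of $V$, $V'$, $G$, $G'$. Substituting this bound into (\ref{6.47}) and using $\mu_n\l_1=O(\mu_n)$ shows $\l(r)=O(\mu_n)\to0$, which also closes the uniqueness argument begun in the first step.
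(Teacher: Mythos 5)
The paper obtains the quantitative bound~(\ref{6.47}) by inserting the \emph{unperturbed} trial function $\psi_0$ alone into the Rayleigh quotient~(\ref{eqn:variational}). The computation is short: since $\psi_0'\propto V/V'$, the numerator is proportional to $G(r)$, and one integration by parts turns the denominator into $\tfrac{G^2(r)}{V(r)}+\int_0^r\tfrac{V'G^2}{V^2}\di t=\tfrac{G^2(r)}{V(r)}+\int_0^r\tfrac{G^2}{G'}\di t$; dividing and using $\mu_n\l_1=V(r)/G(r)$ from~(\ref{6.40}) produces~(\ref{6.47}) \emph{exactly}, as an identity for the Rayleigh quotient of $\psi_0$. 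Your proposal instead plugs in $\psi^{(1)}=\psi_0+\mu_n\psi_1$. That is a genuinely different trial function and its Rayleigh quotient is a different expression involving $\psi_1$ and the operator $\LL$; it will \emph{not} ``reorganise into the stated quotient.'' What~(\ref{6.47}) really is, is the closed-form Rayleigh quotient of $\psi_0$, and the ``bookkeeping'' you flag as the main obstacle is not merely laborious --- it would not land on the advertised formula. Using $\psi^{(1)}$ would of course still yield \emph{some} upper bound, but not the one the lemma asserts, so that step needs to be replaced, not just executed more carefully.

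For the qualitative part --- simplicity and uniqueness of the eigenvalue tending to zero --- your plan relies on two things neither of which is established: (i) norm-resolvent convergence of $\HH_r$ to the Laplace--Beltrami operator on the full compact manifold, and (ii) a lower bound on the second eigenvalue ``from min-max and domain monotonicity, comparing with a fixed slightly smaller ball.'' Point (ii) has the monotonicity backwards: for Dirichlet eigenvalues, $B(r_0)\subset B(r)$ with $r_0<r$ gives $\l_2(B(r))\leqslant\l_2(B(r_0))$, which is an \emph{upper} bound, not the needed lower bound; monotonicity alone only yields $\lim_{r\to R^-}\l_2(B(r))\geqslant 0$, which is useless. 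Point (i) is delicate for shrinking-hole singular perturbations (the operators act in $r$-dependent spaces, the boundary set has vanishing measure, and the appropriate notion is generalized norm-resolvent convergence with nontrivial hypotheses); invoking it without proof leaves a real gap, and indeed is precisely the sort of machinery the paper deliberately avoids. The paper's own argument is elementary and self-contained: assume $\l_*\to0$ with eigenfunction $\psi_*$ normalized in $C[0,r]$, split $\psi_*$ into its mean $a_*$ and a zero-mean part $\widetilde\psi_*$, apply Lemma~\ref{lm7.3} to $g=\l_*\widetilde\psi_*$ to get the representation~(\ref{6.52}) $\psi_*=\tfrac{\l_*a_*}{\l_0\mu_n}\psi_0+\widehat\psi_*$ with $\|\widehat\psi_*\|_{Y_0}\leqslant C\l_*$, and then conclude: two such eigenfunctions would both be close to multiples of $\psi_0$, contradicting linear independence. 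You should reproduce that structure rather than appeal to abstract spectral convergence.

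Your treatment of the estimate on $G^{-1}(r)\int_0^r G^2(t)/G'(t)\di t$ is essentially the paper's: read off the singular behaviour of $V^2/V'$ near $t=R$ from~(\ref{6.9}), deduce $G(r)\sim c\,\mu_n^{-1}(r)$, split the integral at $R/2$, and integrate the power of $(R-t)$ to land in the three regimes $n=2,3$, $n=4$, $n\geqslant5$. That part is fine; nonnegativity is indeed immediate from positivity of $V$, $V'$, $G$, $G'$.
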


\begin{proof}
We first prove the upper bound for $\l(r)$. Using $\psi_0$ as a test function in the Rayleigh quotient (\ref{eqn:variational}),
we obtain
\begin{equation}\label{6.48}
\l(r)\leqslant \frac{\int\limits_{0}^{r} f^{n-1}(\psi_0')^2\di t}{\int\limits_{0}^{r} f^{n-1}\psi_0^2\di t}=\frac{G(r)}{\int\limits_{0}^{r} V'(t)
\left(\int\limits_{t}^{r} \frac{V(s)}{V'(s)}\di s\right)^2\di t}.
\end{equation}
The denominator may be simplified by integration by parts as follows
\begin{align*}
\int\limits_{0}^{r} V'(t)\left(\int\limits_{t}^{r} \frac{V(s)}{V'(s)}\di s\right)^2\di t= & 2
\int\limits_{0}^{r} G'(t)\int\limits_{t}^{r} \frac{V(s)}{V'(s)}\di s \di t= 2
\int\limits_{0}^{r} \frac{G(t) V(t)}{V'(t)}\di s \di t
\\
=& \frac{G^2(r)}{V(r)} + \int\limits_{0}^{r} \frac{V'(t) G^2(t)}{V^2(t)}\di t.
\end{align*}
Substituting this identity into (\ref{6.48}), we arrive at the first estimate in (\ref{6.47}). Let us prove the second one.

By (\ref{6.9}) we have
\begin{equation*}
G(r)=\frac{V^2(r)}{\mu_n}(1+\odr(1)).
\end{equation*}
It follows from the definition of $G$ and (\ref{6.9}) that
\begin{align*}
&G^{-1}(r)\int\limits_{0}^{r} \frac{G^2(t)}{G'(t)}\di t=G^{-1}(r) \left( \int\limits_{0}^{R/2}+\int\limits_{R/2}^{r}
\right)\frac{V'(t) G^2(t)}{V(t)}\di t
\\
&\hphantom{G^{-1}(r)}= G^{-1}(r) \int\limits_{R/2}^{r}\frac{V'(t) G^2(t)}{V(t)}\di t +C\mu_n(r)
\leqslant C\mu_n(r) \left( \int\limits_{R/2}^{r} f^{n-1}(t) G^2(t)\di t +1\right),
\\
&\mu_n(r)  \int\limits_{R/2}^{r} f^{n-1}(t) G^2(t)\di t\leqslant C\mu_n(r) \int\limits_{R/2}^{r} f^{n-1}(t)\left(\int\limits_{R/2}^{t} \frac{\di
s}{f^{n-1}(s)}\right)^2\di t
\\
&\hphantom{\mu_n(r)  \int\limits_{R/2}^{r} f^{n-1}(t) G^2(t)\di t}\leqslant \left\{
\begin{aligned}
&C\mu_n(r), && n=2,3,
\\
&C\mu_n(r)\ln\mu_n(r), && n=4,
\\
&C\mu_n^{\frac{2}{n-2}}(r), && n\geqslant 5,
\end{aligned}
\right.
\end{align*}
where $C$ denotes various inessential constants independent of $\mu_n$. The second estimate in (\ref{6.47}) is proven.

Estimate (\ref{6.47}) yields that $\l(r)\to0^+$ as  $r\to R^{-}$. It remains to prove that there are no other eigenvalues of (\ref{eqn:basic})
converging to zero.

Let $\l_*(r)$ be an eigenvalue of (\ref{eqn:basic}) converging to zero and $\psi_*(t,r)$ be an associated eigenfunction. We normalize $\psi_*$ by the
condition
\begin{equation}\label{6.50}
\max\limits_{[0,r]} |\psi_*(\cdot,r)|=1.
\end{equation}
Then we represent $\psi_*$ as
\begin{equation*}
\psi_*(t,r)=a_*(R)+\widetilde{\psi}_*(t,r),\quad a_*(r):=r^{-1}\int\limits_{0}^{r}\psi_*\di t,\quad\int\limits_{0}^{r} f^{n-1} \widetilde{\psi}_*\di
t=0.
\end{equation*}
We observe that $a_*$ and $\widetilde{\psi}_*$ are bounded uniformly in $t$ and $r$. Hence, we can apply Lemma~\ref{lm7.3} to
$g=\l_*\widetilde{\psi}_*$ and represent $\psi_*$ as
\begin{equation}\label{6.52}
\psi_*=\frac{\l_* a_*}{ \l_0\mu_n}\psi_0+\widehat{\psi}_*,\quad \|\widehat{\psi}_*\|_{Y_0}\leqslant C\l_*,
\end{equation}
where $C$ is a constant independent of $r$.  Now
we employ  normalization (\ref{6.50}),
\begin{align*}
1\geqslant  |\psi_*(0,r)|=\left|\frac{\l_* a_*}{ \l_0\mu_n}\psi_0(0,r)+\widehat{\psi}_*(0,r)
\right|\geqslant \frac{\l_*a_*}{\l_0\mu_n}-C\l_*
\end{align*}
that implies
\begin{equation*}
\l_* a_*\leqslant C_1\l_0\mu_n,
\end{equation*}
where $C_1$ is a constant independent of $\e$. At the same time, \begin{equation*}
1=|\psi_*(t_0,r)|\leqslant  \frac{\l_* a_*}{ \l_0\mu_n}\psi_0(0,r)+C\l_*=\frac{\l_* a_*}{ \l_0\mu_n}+C\l_*
\end{equation*}
and therefore
\begin{equation*}
\l_*a_*\geqslant C_2\l_0\mu_n,
\end{equation*}
where $C_2$ is a constant independent of $r$. Hence, the first term in the right hand side of (\ref{6.52}) is of order $\Odr(1)$ while
$\widetilde{\psi}_*$ is of order $\Odr(r)$. If we assume now that there are two eigenvalues of (\ref{eqn:basic}) converging to zero, then the
associated eigenfunctions satisfy (\ref{6.52}). At the same time, it contradicts to the fact that these eigenfunctions should be linear independent.
\end{proof}

By the proven lemma the closest to $\l^{(N)}$ eigenvalue of $\HH_r$ is $\l(r)$. Hence,
\begin{equation*}
\|(\HH_r-\l^{(N)})^{-1}\|=\frac{1}{|\l^{(N)}(r)-\l(r)|},
\end{equation*}
and by Lemma~\ref{lm7.1} it follows
\begin{equation}\label{6.57}
\begin{aligned}
&\|\psi^{(N)}\|_{X_r^0}\leqslant \|(\HH_r-\l^{(N)})^{-1}\|\, \|h^{(N)}\|_{X_r^0}\leqslant \frac{\|h^{(N)}\|_{X_r^0}}{|\l^{(N)}(r)-\l(r)|}
\\
&\hphantom{\|\psi^{(N)}\|_{X_r^0}}\leqslant \frac{C_N R}{|\l^{(N)}(r)-\l(r)|},
\\
&\|\psi^{(N)}\|_{X_r^0}=\|\psi_0\|_{X_r^0}+\odr(1).
\end{aligned}
\end{equation}
We calculate the norm $\|\psi_0\|_{X_r^0}$ by integration by parts and employing (\ref{6.10}),
\begin{align*}
\|\psi_0\|_{X_r^0}^2=&\int\limits_{0}^{r} f^{n-1}(t) \psi_0^2(t,r)\di t=\frac{2\l_0^2\mu_n^2}{\om_{n-1}}\int\limits_{0}^{r} \frac{V^2(t)}{V'(t)} \left(
\int\limits_{t}^{r} \frac{V(s)}{V'(s)}\di s\right)\di t
\\
=& \frac{2\l_0^2\mu_n^2 V^3(r)}{\om_{n-1}} \int\limits_{R/2}^{r} \frac{1}{V'(t)} \left(
\int\limits_{R/2}^{t} \frac{\di s}{V'(s)}\right)\di t \cdot (1+\odr(1))=\frac{2 V(R)}{\om_{n-1}} (1+\odr(1)).
\end{align*}
We substitute the obtained identity into (\ref{6.57}),
\begin{equation}\label{6.53}
\frac{2 V(R)}{\om_{n-1}} (1+\odr(1)) \leqslant \frac{C_N R \mu_n^{N+1}}{|\l(r)-\l^{(N)}(r)},\quad |\l(r)-\l^{(N)}(r)|=\Odr(\mu_n^{N+1}),
\end{equation}
that justifies  asymptotics (\ref{6.12}) for $\l(r)$.

Let us justify  asymptotics (\ref{6.12}) for $\psi$. By \cite[Ch. V, Sec. 3.5, Eq. (3.21)]{K} we have the representation
\begin{equation}\label{6.54}
\psi^{(N)}=\frac{(h^{(N)},\psi)_{X_r^0}}{\l(r)-\l^{(N)}(r)} \psi + \RR(r) h^{(N)},
\end{equation}
where $\RR(r)$ is an operator in $X_r^0$ bounded uniformly in $r$ and mapping $X_r^0$ into the orthogonal complement of $\psi$ in $X_r^0$, and $\psi$
is supposed to be normalized in $X_r^0$. Therefore,
\begin{equation}\label{6.55}
(\psi^{(N)},\psi)_{X_r^0}=\frac{(h^{(N)},\psi)_{X_r^0}}{\l(r)-\l^{(N)}(r)},\quad \|\RR(r)h^{(N)}\|_{X_r^0}=\Odr(\mu_n^{N+1}).
\end{equation}
It follows from definition (\ref{6.21}) of $\psi^{(N)}$  and the boundedness of $\|\psi_j\|_{C[0,R]}$ that for each $N\geqslant 0$
\begin{equation*}
 \frac{(h^{(N)},\psi)_{X_r^0}}{\l(r)- \l^{(N)}(r)}-\frac{(h^{(N+1)},\psi)_{X_r^0}}{\l(r)-\l^{(N+1)}(r)} =\Odr(\mu_n^{N+1}).
\end{equation*}
Hence, there exists a function $b(\mu_n)$ such that for each $N\geqslant 0$
\begin{equation*}
\frac{(h^{(N)},\psi)_{X_r^0}}{\l(r)-\l^{(N)}(r)}=b(\mu_n)+\Odr(\mu_n^{N+1}). \end{equation*}
We substitute this identity and the second relation from (\ref{6.55}) into (\ref{6.54}),
\begin{equation*}
b(\mu_n)\psi=\psi^{(N)}+\Odr(\mu_n^{N+1})
\end{equation*}
in the norm of $X_r^0$. This identity is also valid in the norm of $X_r$, since by (\ref{6.53}) the equations for $\psi$ and $\psi^{(N)}$
\begin{align*}
&\HH_r(\psi^{(N)}-\psi)=\l^{(N)}(\psi^{(N)}-\psi)+(\l^{(N)}-\l)\psi+h^{(N)},
\\
&\|(\psi^{(N)}-\psi)'\|_{X_r^0}=\l^{(N)}\|\psi^{(N)}-\psi\|_{X_r^0}^2 + (\l^{(N)}-\l)(\psi,\psi^{(N)}-\psi)_{X_r^0}
\\
&\hphantom{\|(\psi^{(N)}-\psi)'\|_{X_r^0}=} + (h^{(N)},\psi^{(N)}-\psi)_{X_r^0}=\Odr(\mu_n^{2N+2}).
\end{align*}
The justification is complete.

As in the previous section, let us calculate the leading terms of asymptotics (\ref{6.12}) in a more explicit form. We assume that $f\in C^6[0,R]$ and
$f^{(4)}(0)=f''(0)=0$. Then
\begin{align*}
&
\begin{aligned}
f(t)=&A(t-R) + \frac{f'''(0)}{3!}(t-R)^3
\\
&+ \frac{f^{(5)}(0)}{5!} (t-R)^5+ \Odr\big((t-R)^6\big),\quad t\to R^{-},
\end{aligned}
\\
&
\begin{aligned}
f^{n-1}(t)=&A^{n-1}(t-R)^{n-1}\big(1+A_2(t-R)^2
\\
&+A_4(t-R)^4+\Odr((t-R)^5)\big),\quad t\to R^{-},
\end{aligned}
\end{align*}
We employ the identity
\begin{equation*}
V(r)=V(R)-w_{n-1}\int\limits_{r}^{R} f^{n-1}(t)\di t,
\end{equation*}
to obtain
\begin{equation}
 \begin{aligned}
 V(t)=&V(R)- \frac{\om_{n-1} A^{n-1}}{n}(t-R)^n
\\ &- \frac{\om_{n-1}A^{n-1}A_2}{n+2}(t-R)^{n+2}+\Odr\big((t-R)^{n+4}\big),
 \end{aligned}\label{6.4}
\end{equation}
and
\begin{equation}
\begin{aligned}
\frac{V^2(t)}{V'(t)}=& B_{n,1}^{(1)}(t-R)^{-n+1}+B_{n,2}^{(1)}(t-R)^{-n+3}+B_{n,3}^{(1)}
(t-R)^{-n+5}
\\
&+B_{n,4}^{(1)}(t-R)+B_{n,5}^{(1)}(t-R)^3+B_{n,6}^{(1)}(t-R)^{(n+1)}
\\
&+\Odr\big((R-t)^{-n+6}+(R-t)^5+(R-t)^{n+3}
\big),\quad t\to R^{-},
\end{aligned}\label{6.5}
\end{equation}
where
\begin{align*}
&B_{n,4}^{(1)}:=-\frac{2V(R)}{A^{n-1}n},\quad B_{n,5}^{(1)}:=\frac{2A_2V(R)}{A^{n-1}}\left(\frac{1}{n}-\frac{A^{n-1}}{n+2}\right), \quad
B_{n,6}^{(1)}:=\frac{\om_{n-1}}{A^{n-1}n^2}.
\end{align*}
We recall that other constants $B_{n,j}^{(1)}$ and $B_j^{(2)}$ were defined in the formulation of Theorem~\ref{th5.1}.

\begin{rem}\label{rm5.1}
Provided $f$ is smooth enough and all its even derivatives vanish at $R$, we can write the next terms in expansion (\ref{6.5}).
They will be of order $\Odr((t-R)^{-n+2k+1})$, $k\geqslant 0$, and for even dimensions we obtain a term of order $\Odr((t-R)^{-1})$.
After integration, this will produce the logarithmic term in the expansion for $G(r)$ as $r\to R^{-}$, giving rise to such
terms appearing in the expansion for $\lambda(r)$.
\end{rem}

Suppose $n\geqslant 6$. Then by (\ref{6.12})
\begin{equation}\label{5.44}
\l(r)=\mu_n(r)\l_1(\r)+\Odr\big((R-r)^{2n-4}\big),\quad r\to R^{-}.
\end{equation}
Let us identify the asymptotic behavior of $\l_1(r)$ as $r\to R^{-}$. We first employ (\ref{6.45}) and integrate (\ref{6.5}) to do it for $G(r)$,
\begin{align*}
G(r)=& B_{n,1}^{(1)}\mu_n^{-1}(r) +B_{n,2}^{(1)}\mu_{n-2}^{-1}(r)+B_{n,3}^{(1)}\mu_{n-4}^{-1}(r) + \Odr\big((R-r)^{-n+7}+1\big),\quad r\to R^{-}.
\end{align*}
Thus, by (\ref{6.40}), (\ref{6.4}) we get
\begin{align*}
\mu_n(r)\l_1(r)=& \frac{V(R)}{B_{n,1}^{(1)}}\mu_n(r) \Bigg(1 - \frac{B_{n,2}^{(1)}}{B_{n,1}^{(1)}}\frac{\mu_n(r)}{\mu_{n-2}(r)}
- \frac{B_{n,3}^{(1)}}{B_{n,1}^{(1)}}\frac{\mu_n(r)}{\mu_{n-4}(r)}
\\
&\Bigg( \frac{B_{n,2}^{(1)}}{B_{n,1}^{(1)}}\frac{\mu_n(r)}{\mu_{n-2}(r)}
\Bigg)^2
\Bigg)+\Odr\big((R-r)^5+(R-r)^{n-2}
\big),\quad r\to R^{-}.
\end{align*}
Together with (\ref{5.44}) it implies (\ref{5.45}) and (\ref{5.46a}).

In order to calculate similar three-terms asymptotics for $\l(r)$ for low dimensions $n=2,3,4,5$ we cannot neglect higher terms of
asymptotics (\ref{6.12}) as in (\ref{5.44}). The reason is that higher terms also contribute to the desired asymptotics. In what follows we
consider separately each dimension.

Consider first the case $n=5$. We take first two terms in (\ref{6.12}),
\begin{equation}\label{5.47a}
\l(r)=\mu_5(r)\l_1(r)+\mu_5^2(r)\l_2(r)+\Odr\big((R-r)^9\big),\quad r\to R^{-}.
\mathcal{}\end{equation}
Integrating (\ref{6.5}) and proceeding as above, as
$r\to R^{-}$ we get
\begin{equation}\label{5.47}
G(r)=-\frac{B_{5,1}^{(1)}}{3} (r-R)^{-3}-B_{5,2}^{(1)} (r-R)^{-1} +B_0^{(2)}+\Odr\big((R-r)^2\big),
\end{equation}
and hence
\begin{equation}\label{5.46}
\begin{aligned}
\mu_5(r)\l_1(r)=& -\frac{3 V(R)}{B_{5,1}^{(1)}}(r-R)^3+ \frac{9V(R)B_{5,2}^{(1)}}{\big(B_{5,1}^{(1)}\big)^2}(r-R)^5
\\
&- \frac{9 V(R) B_0^{(2)}}{\big(B_{5,1}^{(1)}\big)^2}(r-R)^6+\Odr\big((R-r)^7\big),\quad r\to R^{-},
\end{aligned}
\end{equation}
To find out similar formula for $\mu_5^2 \l_2$ in (\ref{5.47a}), we first convert expression (\ref{6.44}) for $\l_2$ to a more convenient form.
Employing (\ref{6.44}), (\ref{6.46}), (\ref{6.30}), and (\ref{6.7}) and integrating by parts, we get
\begin{equation}\label{5.47b}
\begin{aligned}
\mu_n^2(r)\l_2(r)=&-\frac{\mu_n(r)\l_1(r)}{\l_0(r) G(r)}\int\limits_{0}^{r}V'(t)\psi_1(t,r)\di t=\frac{\mu_n(r)\l_1(r)}{\l_0(r) G(r)} \int\limits_{0}^{r}V(t)\psi'_1(t,r)\di t
\\
&= \frac{\mu_n(r)\l_1^2(r)}{\l_0(r) G(r)} \int\limits_{0}^{r} \di t \frac{V(t)}{V'(t)} \int\limits_{0}^{t} V'(s)\psi_0(s,r)\di s
\\
&=\frac{\mu_n^2(r)\l_1^2(r)}{G(r)} \int\limits_{0}^{r} \di t \frac{V(t)}{V'(t)} \int\limits_{0}^{t} \frac{V(s)-V(t)}{V'(s)}V(s)\di s.
\end{aligned}
\end{equation}
We observe that this formula is valid for all $n\geqslant 2$. Together with (\ref{6.4}), (\ref{6.5}), (\ref{5.46}), (\ref{5.47}) it yields
\begin{equation*}
\mu_5^2\l_2(r)=B_1^{(2)}(r-R)^6+\Odr\big((r-R)^7\big),\quad r\to R^{-}.
\end{equation*}
By (\ref{5.47a}), (\ref{5.46}) it implies (\ref{5.49}).

The case $n=4$ can be treated in the same way as the case $n=5$ and below we provide only the main formulas. The analogue of (\ref{5.47}) reads as
\begin{equation*}
G(r)=-\frac{B^{(1)}_{-3}}{2}(r-R)^{-2}+ B_{4,2}^{(1)} \ln (R-r) + B_2^{(2)}+\Odr\big((R-r)^2\big),\quad r\to R^{-},
\end{equation*}
Formulae for $\mu_4\l_1$, $\mu_4^2\l_2$ follow from above one, (\ref{5.47b}),
\begin{align*}
\mu_4(r)\l_1(r)=&-\frac{2V(R)}{B_{4,1}^{(1)}} (r-R)^2 - \frac{4V(R)}{\big(B_{4,1}^{(1)}\big)^2} \big( B_{4,2}^{(1)} \ln (R-r) + B_2^{(2)}\big)
\\
& + \Odr\big( (R-r)^6\ln^2(R-r)
\big),\quad r\to R^{-},
\end{align*}
and
\begin{equation*}
\mu_4^2(r)\l_2(r)=B_3^{(2)}(r-R)^4+\Odr\big((R-r)^6|\ln(R-r)|\big),\quad r\to R^{-}.
\end{equation*}
Hence, by (\ref{6.12}), we obtain (\ref{5.52a}).

We proceed to the case $n=3$. In contrast to all previous cases, here we have to deal with first three terms in (\ref{6.12}), namely,
\begin{equation}\label{5.50}
\l(r)=\mu_3(r)\l_1(r)+\mu_3^2(r)\l_2(r)+\mu_3^3(r)\l_3(r)+\Odr\big((R-r)^4\big), \quad r\to R^{-}.
\end{equation}
First two terms can be treated as above,
\begin{equation*}
G(r)=-B_{3,1}^{(1)}(r-R)^{-1}+B_4^{(2)}+B_{3,2}^{(1)}(r-R)+\Odr\big((R-r)^2\big),
\quad r\to R^{-},
\end{equation*}
and
\begin{align}
&
\begin{aligned}
\mu_3(r)\l_1(r)=&-\frac{V(R)}{B_{3,1}^{(1)}}(r-R)-\frac{V(R)B_4^{(2)}}{\big(
B_{3,1}^{(1)}\big)^2}(r-R)^2
\\
&+B_5^{(2)}(r-R)^3+\Odr\big((R-r)^3\big),\quad r\to R^{-},
\end{aligned}
\label{5.52}
\\
&\mu_3^2\l_2(r)=B_6^{(2)}(r-R)^2+B_7^{(2)}(r-R)^3+\Odr\big((R-r)^4\big),\quad r\to R^{-},\label{5.54}
\end{align}
To obtain similar formula for the third term in the right hand side of (\ref{5.50}), we again first convert formula $\l_3$ by integration by parts, as it was done in (\ref{5.47b}). By (\ref{6.44}) we have
\begin{equation*}
\mu_n^3(r)\l_3(r)=-\frac{\mu_n^2(r)}{\l_0(r)G(r)} \left( \l_1(r) \int\limits_{0}^{r}V'(t)\psi_2(t,r)\di t+ \l_2(r) \int\limits_{0}^{r} V'(t)\psi_1(t,r)\di t\right).
\end{equation*}
Integrating by parts and employing (\ref{6.46}), (\ref{6.7}), we obtain
\begin{align*}
\int\limits_{0}^{r} V'(t)\psi_2'(t,r)\di t&=-\int\limits_{0}^{r} V(t)\psi'_2(t,r)\di t
\\
&=\int\limits_{0}^{r} \di t \frac{V(t)}{V'(t)} \int\limits_{0}^{t} V'(s)\big(\l_2(r)\psi_0(s,r)+\l_1(r)\psi_1(s,r)\big)\di s
\\
&=-\int\limits_{0}^{r} \di t \frac{V(t)}{V'(t)} \int\limits_{0}^{t} \big(V(s)-V(t)\big) \big(\l_2(r)\psi_0'(s,r)+\l_1(r)\psi_1'(s,r)\big)\di s.
\end{align*}
This identity and (\ref{5.47b}) yield
\begin{equation}\label{5.64}
\mu_n^3(r)\l_3(r)=-\frac{\mu_n^3\l_1(r)^3}{G(r)} \int\limits_{0}^{r} \di t \frac{V(t)}{V'(t)} \int\limits_{0}^{t}\di s \frac{V(s)-V(t)}{V'(s)} \int\limits_{0}^{s} \frac{V(z)-V(s)}{V'(z)}V(z)\di z.
\end{equation}
Hence,
\begin{equation*}
\mu_3^3(r)\l_3(r)=B_{10}^{(2)}(r-R)^3+\Odr\big((R-r)^4\big),\quad r\to R^{-},
\end{equation*}
and by (\ref{5.50}), (\ref{5.52}), (\ref{5.54}) it yields (\ref{5.60a}).

It remains to consider the case $n=2$. As in (\ref{5.50}), we take first three terms in (\ref{6.12}),
\begin{equation}\label{5.60}
\l(r)=\mu_2(r) \l_1(r)+\mu_2^2(r)\l_2(r)+\mu_2^3(r)\l_3(r)+\Odr\big(\ln^{-4}(R-r)\big),\quad r\to R^{-}.
\end{equation}
It follows from (\ref{6.5}), (\ref{6.45}) that
\begin{equation}\label{5.61}
G(r)=B_{2,1}^{(1)} \ln(R-r)+B_{11}^{(2)}+\Odr\big((R-r)^2\big),\quad r\to R^{-},
\end{equation}
Hence, by (\ref{6.40}), (\ref{6.4}),
\begin{equation}\label{5.62}
\begin{aligned}
\mu_2(r)\l_1(r)=&\frac{V(R)}{B_{2,1}^{(1)}}\ln^{-1}(R-r) - \frac{ B_{11}^{(2)}V(R)}{\big(B_{2,1}^{(1)}\big)^2} \ln^{-2}(R-r)
\\
&+ \frac{\big(B_{11}^{(2)}\big)^2 V(R)}{\big(B_{2,1}^{(1)}\big)^3} \ln^{-3}(R-r) + \Odr\big((R-r)^2\ln^{-1}(R-r)\big),\quad r\to R^{-}.
\end{aligned}
\end{equation}
Employing (\ref{5.47b}), in the same way we get
\begin{equation}\label{5.63}
\begin{aligned}
\mu_2^2(r)\l_2(r)=&\frac{B_{12}^{(2)}V^2(R)}{\big(B_{2,1}^{(1)}\big)^3} \ln^{-2}(R-r)
\\
& + \left( \frac{B_{13}^{(2)} V^2(R)}{\big(B_{2,1}^{(1)}\big)^3}- 2 \frac{\big(B_{12}^{(2)}\big)^2V^4(R)} {\big(B_{2,1}^{(1)}\big)^6} - \frac{B_{11}^{(2)} B_{12}^{(2)} V^2(R)}{\big( B_{2,1}^{(1)}\big)^4}
\right)\ln^{-3}(R-r)
\\
&+ \Odr\big(\ln^{-4}(R-r)\big),\quad r\to R^{-},
\end{aligned}
\end{equation}
And (\ref{5.64}), (\ref{5.61}), (\ref{5.62}) yield
\begin{equation*}
\mu_2^3(r)\l_3(r)=B_{14}^{(2)}\ln^{-3}(R-r)+\Odr\big(
\ln^{-4}(R-r)\big),\quad r\to R^{-},
\end{equation*}
The last identity and (\ref{5.60}), (\ref{5.62}), (\ref{5.63}) imply (\ref{5.65}).

\section*{Acknowledgments}

D.B. is partially supported by RFBR (grant no. 13-01-00081-a). 
Part of this work was finished while P.F. was visiting the Laboratoire Jacques-Louis Lions at the University Pierre et Marie Curie, Paris, and he would like to
thank the people there for their hospitality.

\end{document}